\documentclass[10pt]{article}
\usepackage{mathrsfs}
\usepackage{amsfonts}
\usepackage{titlesec}
\usepackage{enumerate}
\usepackage[toc,page,title,titletoc,header]{appendix}
\usepackage{amsthm,amsmath,amssymb,anysize}
\usepackage[numbers,sort&compress]{natbib}
\theoremstyle{definition}
\newtheorem{theorem}{Theorem}
\newtheorem{lemma}{Lemma}[section]

\newtheorem{proposition}[lemma]{Proposition}
\newtheorem{definition}[lemma]{Definition}
\newtheorem{corollary}[lemma]{Corollary}
\newtheorem{example}[lemma]{Example}
\setlength{\parindent}{1em} \setlength{\baselineskip}{20pt}

\newcommand{\periodafter}[1]{#1.}
\titleformat{\subsection}[runin]
{\normalfont\bfseries}{\thesubsection}{0.5em}{\periodafter}
\pagestyle{myheadings}
\marginsize{4.2cm}{4.2cm}{3.6cm}{3cm}
\numberwithin{equation}{section}

\renewcommand{\thesubsection}{\arabic{section}.\arabic{subsection}}

\newcounter{RomanNumber}

\DeclareMathOperator\rank{\mathrm{rank}}

\markright {Representations of   contact  and special contact Lie superalgebras of odd type}
\title{Non-restricted representations of  contact and special contact Lie superalgebras of odd type\footnote{Supported by the NSF of China (12061029) and the NSF of Heilongjiang
Province (YQ2020A005).}}
\author{\textsc{ Shujuan Wang$^1$,
  Wende Liu$^{2}$\footnote{Correspondence:  wendeliu@ustc.edu.cn (W. Liu).}}\\
{\small \textit{$^1$Department of Mathematics, Shanghai Maritime University,}}\\
\small \textit{Shanghai 201306, China}  \\
\small\textit{$^2$School of Mathematics and Statistics,}
\textit{Hainan Normal University,}\\ \small\textit{ Haikou 571158,  China} }
\date{ }

\begin{document}
\maketitle
\begin{quotation}
\small\noindent \textbf{Abstract}:
Let $\frak{g}$ be a contact Lie superalgebra of odd   type
or special contact Lie superalgebra of odd   type
over an algebraically closed field of  characteristic $p>3$.
In this paper we  study   non-restricted representations of
$\frak{g}$. By using induced Kac modules,
we characterize all simple $\frak{g}$-modules with nonsingular
or $\Delta$-invertible  $p$-characters.
We also obtain all simple $\frak{g}$-modules with regular semisimple
$p$-characters.

\vspace{0.2cm} \noindent{\textbf{Keywords}}: Induced Kac module;
non-restricted representation;  contact and special contact Lie superalgebras of odd type

\vspace{0.2cm} \noindent{\textbf{Mathematics Subject Classification 2010}}: 17B10,   17B35, 17B50
\end{quotation}

\setcounter{section}{-1}
\section{Introduction}
Analogous to finite-dimensional
 simple Cartan type modular Lie algebras,
four families of  Cartan type modular Lie superalgebras
(that is, the Witt type, the special type, the Hamiltonian type and
 the contact type)
 were constructed  by Zhang in \cite{Z}.
 Later another four families of finite-dimensional
simple Cartan type modular Lie superalgebras
(that is,
the Hamiltonian Lie superalgebras of odd  type,
the  contact Lie superalgebras  of  odd type,
the special Hamiltonian Lie superalgebras of odd  type
and the special contact Lie superalgebras of odd type) are introduced by Fu, Liu and Zhang etc. in \cite{LZ,FJZ,LH,BL},
which  occur neither in the non-super case, nor in the non-modular case
(among finite-dimensional algebras).
Quite recently, Shu and Zhang determined all
restricted and some non-restricted simple modules of the Witt type modular Lie superalgebras
by virtue of Serganova's  odd reflections  together with Jantzen's $u(\frak{g})$-$T$ module
category (see \cite{SZ,SZ1}).
Later, Shu, Yao and Duan
studied  modular representations of  the first four kinds of  Cartan type modular  Lie superalgebras (see \cite{Y1,Y2,YS,SY,Duan,Duan1}).
In particular, they characterized some non-restricted representations of Witt type, special
type and Hamiltonian type modular Lie superalgebras in \cite{Y2,Duan,Duan1}.
This paper is a sequel to \cite{YL,WLKO}, in which
restricted representations of the Hamiltonian and contact Lie superalgebras of odd type are studied.
As far as I know, non-restricted representations of
the last four series of Cartan type modular Lie superalgebras have not been studied yet until now.
Let $\frak{g}$ be a contact or special contact Lie superalgebra of odd   type,
whose superderivations and maximal graded subalgebras have been studied in \cite{BL} and \cite{LWmax} respectively.
In this paper,  we characterize all simple $\frak{g}$-modules of nonsingular
or $\Delta$-invertible $p$-characters
and determined all simple $\frak{g}$-modules of regular semisimple
$p$-characters. In particular,
all those simple modules mentioned above are proved to
be reduced Kac modules.

\section{Preliminaries}
\subsection{Divided power superalgebras}
Let $\mathbb{F}$ be an algebraically closed field of characteristic  $p>3$
and
$\mathbb{Z}_2= \{\bar{0},\bar{1}\}$  the additive group of two
elements.
Fix  a pair of positive integers $m, n$  and write $\underline{r}=(r_1,\ldots,r_n\mid r_{n+1},\ldots,r_{n+m})$ for a $(m+n)$-tuple of non-negative integers.
Let $\mathbf{I}(n,  m)$ be the set of all $(m+n)$-tuples $\underline{r}$ such that
$ 0\leq r_i<p$ for $ i\leq n $ and $ r_i=0$ or $1$ for $n\leq i\leq m+n.$
Write $\mathcal{O}(n,  m)$  for the  divided power superalgebra (see \cite[p. 226]{Leb}),
which is a  supercommutative associative algebra  having a $\mathbb{Z}_{2}$-homogeneous   basis:
 $
\left\{x^{(\underline{r})}\mid \underline{r}\in \mathbf{I}(n,  m)\right\}
$
with parity
 $ |x^{(\underline{r})}|=\left(\sum_{i>n}r_{i}\right)\bar{1}$
   and
 multiplication relation:
 $$
 x^{(\underline{r})} x^{(\underline{s})}=\prod_{i=m+1}^{m+n}\min(1, 2-r_i-s_i)(-1)^{\sum_{n<i<j\leq m+n}r_js_i}\left(
\begin{array}{c}
\underline{r}+\underline{s} \\
 \underline{r}
\end{array}\right)x^{(\underline{r}+\underline{s})},
 $$
where
$$
\left(\begin{array}{c}
\underline{r}+\underline{s} \\
 \underline{r}
\end{array}\right)=\prod_{i=1}^n\left(\begin{array}{c}
r_i+s_i \\
 r_i
\end{array}\right).
$$
The divided power superalgebra $\mathcal{O}(n, m)$  is a generalization of the divided power algebra $\mathcal{O}(n)$ and  is isomorphic to the tensor product of the divided power algebra with the trivial $\mathbb{Z}_{2}$-grading and the exterior algebra $\Lambda(m)$ with the natural $\mathbb{Z}_{2}$-grading: $\mathcal{O}(n)\otimes_{\mathbb{F}}\Lambda(m).$

\subsection{General vectorial Lie superalgebras}
For $i=1, \ldots, n+m$, letting $\varepsilon_i$ be the $(m+n)$-tuple with 1 at the $i$-th slot and 0  elsewhere,
 we write $x_i$ for $x^{(\varepsilon_i)}$ and
define the distinguished partial derivatives $\partial_i$  by letting
$$
\mbox{$\partial_i(x_j)=\delta_{ij}$, for $j=1, \ldots, n+m$}
$$
and extending $\partial_i$ to $\mathcal{O}(n, m)$ to   a superderivation of parity $|x_{i}|$.
The Lie superalgebra of all superderivations of $\mathcal{O}(n, m)$ contains an important subalgebra, called
 the general vectorial Lie superalgebra of distinguished superderivations (a.k.a. the Lie superalgebra of Witt type), denoted by $\mathfrak{vect}(n, m)$ (a.k.a.
$W(n,  m)$), having an $\mathbb{F}$-basis (see \cite[p. 226--227]{Leb})
$$
\left\{x^{(\underline{r})}\partial_k\mid \underline{r}\in \mathbf{I}(n,  m), 1\leq k\leq m+n\right\}.
$$
Generally speaking, $\mathfrak{vect}(n,  m)$ contains various finite-dimensional simple Lie superalgebras, which are analogous to  finite-dimensional simple modular Lie algebras or infinite-dimensional simple Lie superalgebras of vector fields over $\mathbb{C}$.

\subsection{Contact Lie superalgebras of odd type}
From now on set $m=n+1$.
As in \cite[p. 247--249]{Leb} and \cite[p. 35--42]{Lei},
for any $f\in\mathcal{O}(n, n+1), $
write
$$
\mathrm{M}_{f}=(2-E)(f)\partial_{2n+1}-\mathrm{Le}_{f}-(-1)^{|f|}\partial_{2n+1}(f)E,
$$
where
$E=\sum^{2n}_{i=1}x_{i}\partial_{i}$ is the Euler operator and
$\mathrm{Le}_{f}=\sum_{i=1}^{2n}(-1)^{|\partial_{i}||f|}\partial_{i}(f)\partial_{i^{'}}$
is the Hamiltonian vector field
with
\[ i'=\left\{
 \begin{array}{ll}
i+n,&\mbox{if  $i\leq n$ }\\
i-n,&\mbox{if $i>n.$}
\end {array}
\right.
\]
It is clear that $\mathrm{M}_{f}\in \mathfrak{vect}(n, n+1)$
for any $f\in\mathcal{O}(n, n+1).$
Note that $|\mathrm{M}_{f}|=|f|+\overline{1}$
and
\begin{equation*}\label{hee1.1}
   [\mathrm{M}_{f},\mathrm{M}_{g}]=\mathrm{M}_{\{f,g\}_{m.b.}}\;\mbox{for}\;
   f,g\in\mathcal{O}(n, n+1),
\end{equation*}
where $\{\cdot,\cdot\}_{m.b.}$ is the contact bracket given by
$$
{\{f,g\}_{m.b.}}=\mathrm{M}_f(g) + (-1)^{|f|}2 \partial_{2n+1}(f)(g).
$$
Then
$
\frak{m}:=\{\mathrm{M}_{f}\mid f\in\mathcal{O}(n,   n+1)\}
$
is a finite-dimensional simple  Lie subsuperalgebra of $\mathfrak{vect}(n,   n+1)$,
which is called   the
contact Lie superalgebra of odd type.
Note that $\frak{m}$
  is also called the  odd contact Lie superalgebra
  and  denoted by $KO(n,n+1, \underline{1})$ in  \cite[p. 111]{FJZ}.

Recall that a Lie superalgebra $\mathfrak{g}=\mathfrak{g}_{\bar{0}}\oplus \mathfrak{g}_{\bar{1}}$   is
 called restricted if $\mathfrak{g}_{\bar{0}}$ as a Lie algebra is  restricted   and
 $\mathfrak{g}_{\bar{1}}$ as a $\mathfrak{g}_{\bar{0}}$-module is  restricted.
 For a restricted Lie superalgebra $\mathfrak{g}$,  the $p$-mapping of $\mathfrak{g}_{\bar{0}}$ is also called
the $p$-mapping of the Lie superalgebra $\mathfrak{g}$.
A standard fact is that
$\frak{m}$ is  a restricted Lie superalgebra.
Note that the unique  $p$-mapping of $\frak{m}$ is the usual (associative) $p$-power.
Write
$$\|\underline{r}\|=\sum^{n}_{k=1}r_{k}+\sum^{n}_{k=1}r_{k'}+2r_{2n+1}.$$
Then $\frak{m}$  has a  $\mathbb{Z}$-grading
$\frak{m}=\oplus_{i\geq -2}\frak{m}_{[i]},$ which is said to be  standard,
given by
$$\frak{m}_{[i]}:=\mathrm{span}_{\mathbb{F}}\{\mathrm{M}_{x^{(\underline{r})}}
\mid \underline{r}\in \mathbf{I}(n,  n+1), \|\underline{r}\|=i+2\}, i\geq -2.$$
Hereafter denote by $\|\mathrm{M}_{x^{(\underline{r})}}\|$ the $\mathbb{Z}$-degree of $\mathrm{M}_{x^{(\underline{r})}}$,
which is $\|\underline{r}\|-2$.
There is a natural  filtration with respect to the standard $\mathbb{Z}$-grading:
$$\mbox{$\frak{m}=\frak{m}^{-2}\supseteq\frak{m}^{-1}\supseteq\cdots,$
where $\frak{m}^{i}:=\sum_{j\geq i}\frak{m}_{[j]}.$}$$
Let $\mathfrak{h}$  be the subalgebra of $\frak{m}$ with the basis
$$\{\mathrm{M}_{x^{(\varepsilon_{i}+\varepsilon_{i'})}}, \mathrm{M}_{x_{2n+1}}\mid i=1, \ldots, n\}.$$
Then $\mathfrak{h}$ is a  Cartan subalgebra of $\frak{m}$
and $\frak{m}=\mathfrak{h}\oplus\bigoplus_{0\neq \alpha\in \mathfrak{h}^{*}}\frak{m}_{\alpha},$
where
$$\frak{m}_{\alpha}:=\mathrm{span}_{\mathbb{F}}\{x\in \frak{m}\mid [h, x]=\alpha(h)x, \forall \; h\in \mathfrak{h}\}.$$
Note that $\frak{m}_{[0]}$ has a triangular decomposition
$\frak{m}_{[0]}=\mathfrak{n}_{[0]}^{-}\oplus \mathfrak{h}\oplus\mathfrak{n}_{[0]}^{+},$
where
\begin{eqnarray}\label{1245}
\mathfrak{n}_{[0]}^{-}:=\mathrm{span}_{\mathbb{F}}\{
\mathrm{M}_{x^{(\varepsilon_{i}+\varepsilon_{j'})}}\mid 1\leq j<i\leq n\}+\mathrm{span}_{\mathbb{F}}\{\mathrm{M}_{x^{(\varepsilon_{k'}+\varepsilon_{l'})}}\mid
k, l=1,\ldots, n\},
\end{eqnarray}
\begin{eqnarray}\label{1246}
\mathfrak{n}_{[0]}^{+}:=\mathrm{span}_{\mathbb{F}}\{\mathrm{M}_{x^{(\varepsilon_{i}+\varepsilon_{j'})}}\mid 1\leq i <j\leq n \}+\mathrm{span}_{\mathbb{F}}\{\mathrm{M}_{x^{(\varepsilon_{k}+\varepsilon_{l})}}\mid k, l=1, \ldots, n\}.
\end{eqnarray}
Put $\mathfrak{b}_{[0]}=\mathfrak{h}\oplus \mathfrak{n}_{[0]}^{+},$
which is called the standard Borel subalgebra of $\frak{m}_{[0]}$.

\subsection{Special contact Lie superalgebras of   odd type}

For any $\kappa\in \mathbb{F}$,
let
$$
\mathrm{div}_{\kappa}(f)=(-1)^{|f|}2\left(\Delta
\left(f\right)+\left(E-n\kappa\mathrm{id}\right)
\partial_{2n+1}\left(f\right)\right), ~ f\in \mathcal{O}(n,  n+1),
$$
where
$\Delta=\sum_{i=1}^{n}\partial_{i}\partial_{i'}$
is the odd Laplacian operator.
Then
$$
\{\mathrm{M}_{f}\in \frak{m}\mid \mathrm{div}_{\kappa}(f)=0\}
$$
 is a  subsuperalgebra of $\frak{m}$,
which is called   the  special contact   Lie superalgebra of   odd type,
denoted by $\frak{sm}(\kappa)$.
Its second derived algebra $\frak{sm}(\kappa)^{(2)}$ is simple when $n\geq 3$.
Note that $\frak{sm}(\kappa)$ is also called
  the special odd contact
superalgebra, and denoted by $SKO(n; \underline{1})$
 in  \cite[p. 72]{BL}.
Since $\mathrm{div}_{\kappa}$ is homogeneous
with respect to the standard $\mathbb{Z}$-grading of $\frak{m}$,
$\frak{sm}(\kappa)$  is a
 $\mathbb{Z}$-graded  subsuperalgebra  of $\frak{m}$ and
has also the corresponding filtration.
Let $\mathfrak{t}$ be the subalgebra of $\frak{sm}(\kappa)$ with the basis
$$\left\{\mathrm{M}_{x^{(\varepsilon_{i}+\varepsilon_{i'})}-x^{(\varepsilon_{i+1}+\varepsilon_{(i+1)'})}},  \mathrm{M}_{x_{2n+1}+n\kappa x^{(\varepsilon_{1}+\varepsilon_{1'})}}\mid i=1, \ldots, n-1\right\}.$$
Then $\mathfrak{t}$ is a Cartan subalgebra of  $\frak{sm}(\kappa)$ and
 $\frak{sm}(\kappa)=\mathfrak{t}\oplus\bigoplus_{0\neq \alpha\in \mathfrak{t}^{*}}\frak{sm}_{\alpha}(\kappa),$
where
$$\frak{sm}_{\alpha}(\kappa):=\mathrm{span}_{\mathbb{F}}\left\{x\in \frak{sm}(\kappa)\mid [h, x]=\alpha(h)x, \forall \; h\in \mathfrak{t}\right\}.$$
Note that $\frak{sm}(\kappa)_{[0]}$ has a triangular decomposition
 $\frak{sm}(\kappa)_{[0]}=\mathfrak{\bar{n}}_{[0]}^{-}\oplus \mathfrak{t}\oplus\mathfrak{\bar{n}}_{[0]}^{+},$
where $\mathfrak{\bar{n}}_{[0]}^{-}=\mathfrak{n}_{[0]}^{-}$ and $\mathfrak{\bar{n}}_{[0]}^{+}=\mathfrak{n}_{[0]}^{+}$
(defined in (\ref{1245}) and (\ref{1246}).)
Put $\mathfrak{\bar{b}}_{[0]}=\mathfrak{t}\oplus \mathfrak{n}_{[0]}^{+},$
which is called the standard  Borel subalgebra of $\frak{sm}(\kappa)_{[0]}$.

\subsection{Remark}
For convenience to  readers, below we  characterize $\mathbb{Z}$-grading of  $\frak{m}$ and $\frak{sm}$ in the cases of $n=1$ and $p=5$:
$\frak{m}=\oplus^{5}_{i=-2}\frak{m}_{[i]}$, and $\frak{sm}=\oplus^{3}_{i=-2}\frak{sm}_{[i]},$
where
$$
\frak{m}_{[-2]}=\frak{sm}_{[-2]} = \mathrm{span}_{\mathbb{F}}\{\mathrm{M}_{1}\},
\frak{m}_{[-1]}=\frak{sm}_{[-1]}= \mathrm{span}_{\mathbb{F}}\{\mathrm{M}_{x_{1}}, \mathrm{M}_{x_{2}}\},
$$
$$
\frak{m}_{[0]}= \mathrm{span}_{\mathbb{F}}\{\mathrm{M}_{x^{(2\varepsilon_{1})}}, \mathrm{M}_{x^{(\varepsilon_{1}+\varepsilon_{2})}}, \mathrm{M}_{x_{3}}\},
\frak{sm}_{[0]} = \mathrm{span}_{\mathbb{F}}\{\mathrm{M}_{x^{(2\varepsilon_{1})}}, \mathrm{M}_{x_{3}+n\kappa x^{(\varepsilon_{1}+\varepsilon_{2})}}\},
$$
$$
\frak{m}_{[1]} = \mathrm{span}_{\mathbb{F}}\{\mathrm{M}_{x^{(3\varepsilon_{1})}}, \mathrm{M}_{x^{(2\varepsilon_{1}+\varepsilon_{2})}}, \mathrm{M}_{x^{(\varepsilon_{1}+\varepsilon_{3})}}, \mathrm{M}_{x^{(\varepsilon_{2}+\varepsilon_{3})}}\},
$$
$$
\frak{sm}_{[1]} = \mathrm{span}_{\mathbb{F}}\{\mathrm{M}_{x^{(3\varepsilon_{1})}}, \mathrm{M}_{x^{(\varepsilon_{1}+\varepsilon_{3})}+(n\kappa-1)x^{(2\varepsilon_{1}+\varepsilon_{2})}}\},
$$
$$
\frak{m}_{[2]} = \mathrm{span}_{\mathbb{F}}\{\mathrm{M}_{x^{(2\varepsilon_{1}+\varepsilon_{3})}}, \mathrm{M}_{x^{(\varepsilon_{1}+\varepsilon_{2}+\varepsilon_{3})}}, \mathrm{M}_{x^{(3\varepsilon_{1}+\varepsilon_{2})}}, \mathrm{M}_{x^{(4\varepsilon_{1})}}\},
$$
$$
\frak{sm}_{[2]} = \mathrm{span}_{\mathbb{F}}\{\mathrm{M}_{x^{(2\varepsilon_{1}+\varepsilon_{3})}+(n\kappa-2)x^{(3\varepsilon_{1}+\varepsilon_{2})}}, \mathrm{M}_{x^{(4\varepsilon_{1})}}\},
$$
$$
\frak{sm}_{[3]} = \mathrm{span}_{\mathbb{F}}\{\mathrm{M}_{x^{(3\varepsilon_{1}+\varepsilon_{3})}+(n\kappa-3)x^{(4\varepsilon_{1}+\varepsilon_{2})}}\},
$$
$$
\frak{m}_{[3]}= \mathrm{span}_{\mathbb{F}}\{\mathrm{M}_{x^{(3\varepsilon_{1}+\varepsilon_{3})}}, \mathrm{M}_{x^{(4\varepsilon_{1}+\varepsilon_{2})}}, \mathrm{M}_{x^{(2\varepsilon_{1}+\varepsilon_{2}+\varepsilon_{3})}}\},
$$
$$
\frak{m}_{[4]} = \mathrm{span}_{\mathbb{F}}\{\mathrm{M}_{x^{(3\varepsilon_{1}+\varepsilon_{2}+\varepsilon_{3})}}, \mathrm{M}_{x^{(4\varepsilon_{1}+\varepsilon_{3})}}\},
\frak{m}_{[5]}= \mathrm{span}_{\mathbb{F}}\{\mathrm{M}_{x^{(4\varepsilon_{1}+\varepsilon_{2}+\varepsilon_{3})}}\}.
$$
In fact the ones of right sides are the standard bases.

\subsection{$u_{\chi}(\frak{g})$-modules}
Let $\frak{g}$ be a finite-dimensional restricted Lie superalgebra
with a $p$-mapping $[p]$ and $\chi\in\frak{g}_{\overline{0}}^{*}$.
As in the case of restricted Lie algebras,
we  define the so-called $\chi$-reduced enveloping
superalgebra $u_{\chi}(\frak{g})$ to be the quotient of
the universal enveloping superalgebra  $U(\frak{g})$ by the two-sided ideal
generated by
$$\{x^{p}-x^{[p]}-\chi(x)^{p}\mid x\in \frak{g}_{\overline{0}}\}.$$
A  $\frak{g}$-module $V$  is said to be of a $p$-character $\chi$, if
$x^{p} v-x^{[p]} v=\chi(x)^{p} v$
for any $x\in \frak{g}_{\bar{0}}$ and $ v\in V.$
When $\chi= 0$, a $\frak{g}$-module of the $p$-character $\chi$ is also
said to be a restricted $\frak{g}$-module,
otherwise a non-restricted $\frak{g}$-module.
Parallel to the case of restricted Lie algebras,
 any simple $\frak{g}$-module is finite-dimensional and has a unique $p$-character   $\chi\in\frak{g}_{\overline{0}}^{*}$
 (see \cite{W-ZH, Z1}).
 All the $\frak{g}$-modules with the $p$-character $\chi$   constitute
a  full subcategory of the $\frak{g}$-module category,
which coincides with the $u_{\chi}(\frak{g})$-module category.
For any  $\chi\in\frak{g}_{\overline{0}}^{*},$
we can extend  $\chi$ to   an element in $\frak{g}^{*}$
by letting $\chi(\frak{g}_{\overline{1}})=0.$
Hereafter we always regard $\chi\in\frak{g}_{\overline{0}}^{*}$
as an element of $\frak{g}^{*}$ in this way.
Let $\frak{g}=\frak{g}^{-2}\supseteq\frak{g}^{-1}\supseteq\cdots$
be a filtration of $\frak{g}$  and $\chi\in\frak{g}_{\overline{0}}^{*}.$
Define the height $\mathrm{ht}(\chi)$ of $\chi$ by
$\mathrm{ht}(\chi)=\min\{i\mid \chi(\frak{g}^{i})=0\}.$

From now on, write $\frak{g}=\frak{m}$ or $\frak{sm}(\kappa)$.
Let $\chi\in\frak{g}_{\overline{0}}^{*}$ and $M$ be a simple $u_{\chi}(\frak{g}^{0})$-module.
Then the $\chi$-reduced Kac module (see \cite[p. 243]{Y2}) associated with $M$
is defined to be the induced module
$K_{\chi}(M)=u_{\chi}(\frak{g})\otimes_{u_{\chi}(\frak{g}^{0})}M.$
Note that
$$K_{\chi}(M)\stackrel{\mathrm{superspace}}{\cong} u_{\chi}\left(\frak{g}_{[-1]}\oplus\frak{g}_{[-2]}\right)\otimes_{\mathbb{F}}M.$$
Fix an ordered  basis of $\frak{g}_{[-1]}\oplus\frak{g}_{[-2]}:$
$$\mathrm{M}_{x_{1'}}, \mathrm{M}_{x_{2'}}, \ldots,
\mathrm{M}_{x_{n'}}, \mathrm{M}_{x_{n}},
\mathrm{M}_{x_{n-1}}, \ldots, \mathrm{M}_{x_{1}}, \mathrm{M}_{1}.$$
Let $s=(s_{1'}, s_{2'}, \ldots, s_{n'}, s_{n}, s_{n-1}, \ldots, s_{1}, s_{2n+1})$
be an $(2n+1)$-tuple of non-negative integers with
 \mbox{$s_{i}, s_{2n+1}\in\{0, 1\}$ and $s_{i'}=0, \ldots,  p-1$ for
$i=1, \ldots, n$.}
Then by the PBW theorem, any element
$ v$ of $ K_{\chi}(M)$ can be written
as $ v=\sum_{s}X^{s}\otimes v(s)$ uniquely,
where $v(s)\in M$
and
$$X^{s}=\mathrm{M}^{s_{1'}}_{x_{1'}}\mathrm{M}^{s_{2'}}_{x_{2'}}\cdots
\mathrm{M}^{s_{n'}}_{x_{n'}}\mathrm{M}^{s_{n}}_{x_{n}}
\mathrm{M}^{s_{n-1}}_{x_{n-1}}\cdots\mathrm{M}^{s_{1}}_{x_{1}}\mathrm{M}^{s_{2n+1}}_{1}.$$
Write $|s|=\sum^{n}_{i=1}(s_{i}+s_{i'}+s_{2n+1}).$
By abuse of language, for  a positive integer   $m$ below  denote by
$o(m)$  the summation $\sum_{|s|< m}X^{s}\otimes v(s)$ in $K_{\chi}(M).$

Below we give a basic lemma.

\begin{lemma}\label{maximal vector}
Let $ v=\sum_{|s|\leq m}X^{s}\otimes v(s)\in K_{\chi}(M)$
be an eigenvector of some homogeneous
element $x\in \frak{g}^{3}$
with the eigenvalue $a.$
Then for all $s$ with $|s|=m,$
the nonzero $v(s)$'s are eigenvectors of $x$
with the eigenvalue $a$ or $-a.$
\end{lemma}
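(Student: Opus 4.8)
The plan is to filter $K_{\chi}(M)$ by the number of ``negative-degree'' tensor factors: for $j\ge 0$ put $F_{j}=\sum_{|s|\le j}X^{s}\otimes M$, where $|s|$ is the total number of factors occurring in $X^{s}$. Since $\mathfrak{g}$ is $\mathbb{Z}$-graded with $\mathfrak{g}_{[i]}=0$ for $i<-2$ and $\mathfrak{g}_{[-1]}\oplus\mathfrak{g}_{[-2]}$ is spanned by $\mathrm{M}_{x_{i'}}$, $\mathrm{M}_{x_{i}}$ $(1\le i\le n)$ and $\mathrm{M}_{1}$, this is an exhaustive filtration of $K_{\chi}(M)$ and, by the PBW theorem, $F_{j}/F_{j-1}\cong\bigoplus_{|s|=j}X^{s}\otimes M$ canonically for $j\ge 1$.

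The heart of the matter is the following computation: for homogeneous $x\in\mathfrak{g}^{3}$, $w\in M$ and a basis monomial $X^{s}=g_{1}\cdots g_{N}$ written in the fixed order,
\[
x\cdot(X^{s}\otimes w)=(-1)^{|x|\,|X^{s}|}\,X^{s}\otimes(x\cdot w)\;+\;(\text{a term in }F_{|s|-1}).
\]
To prove this, push $x$ to the right through $g_{1},\dots,g_{N}$ using $xg=(-1)^{|x||g|}gx+[x,g]$ repeatedly. The term in which $x$ is carried all the way to the right accumulates exactly the sign $(-1)^{|x|\sum|g_{i}|}=(-1)^{|x||X^{s}|}$ and produces $X^{s}x$, which then acts on $w\in M$ (recall $\mathfrak{g}^{3}\subseteq\mathfrak{g}^{0}$). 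Every remaining term replaces a nonempty subset of the $g_{i}$'s by a single iterated bracket $[\,[\cdots[x,g_{i_{1}}],\cdots],g_{i_{k}}\,]$, which is then either carried on to act on $w$ (once it lies in $\mathfrak{g}^{0}$) or deposited as a new factor (once it lies in $\mathfrak{g}_{[-1]}\oplus\mathfrak{g}_{[-2]}$). Here a $\mathbb{Z}$-degree count applies: the $g_{i}$ have degree $-1$ or $-2$, at most one factor of $X^{s}$ has degree $-2$ (namely $\mathrm{M}_{1}$, occurring with exponent $\le 1$), and $x$ has degree $\ge 3$. Consequently, if the travelling bracket is to re-enter $\mathfrak{g}_{[-1]}\oplus\mathfrak{g}_{[-2]}$ it must first absorb at least two of the $g_{i}$ while re-creating at most one factor, and if it is instead to land in $\mathfrak{g}^{0}$ and act on $w$ it must have absorbed at least one of the $g_{i}$ and re-creates none; in either case the surviving monomial has strictly fewer factors than $X^{s}$ and so lies in $F_{|s|-1}$. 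The $p$-truncation relations defining $u_{\chi}$ only decrease the number of factors further.

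Granting the displayed formula, the lemma follows quickly. Apply it to $v=\sum_{|s|\le m}X^{s}\otimes v(s)$ with $x\cdot v=a v$: the monomials with $|s|<m$ contribute terms lying entirely in $F_{m-1}$, while those with $|s|=m$ contribute $(-1)^{|x||X^{s}|}X^{s}\otimes(x\cdot v(s))$ modulo $F_{m-1}$. Reading $x\cdot v=a v$ in $F_{m}/F_{m-1}\cong\bigoplus_{|s|=m}X^{s}\otimes M$ and comparing $X^{s}$-components yields $(-1)^{|x||X^{s}|}\,x\cdot v(s)=a\,v(s)$ in $M$ for every $s$ with $|s|=m$. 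Hence each nonzero $v(s)$ with $|s|=m$ is an eigenvector of $x$ acting on $M$ with eigenvalue $(-1)^{|x||X^{s}|}a\in\{a,-a\}$, the value $-a$ occurring precisely when $x$ and $X^{s}$ are both odd.

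The main obstacle is the middle paragraph, i.e.\ verifying rigorously that no correction term survives at filtration level $|s|$. This requires keeping simultaneous track of the $\mathbb{Z}$-degree of the travelling element and of the number of factors it has absorbed, and paying attention to the central factor $\mathrm{M}_{1}$, which, although central in $\mathfrak{g}_{[-1]}\oplus\mathfrak{g}_{[-2]}$, does not commute with $x$ and lowers the $\mathbb{Z}$-degree by two at a time. Once the filtration statement is secured, the sign bookkeeping is routine and the rest is just PBW.
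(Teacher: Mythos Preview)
Your proof is correct and follows essentially the same approach as the paper's: both compute the action of $x$ on $v$ modulo the filtration level $m-1$, observe that only the sign-twisted term $\pm X^{s}\otimes x\cdot v(s)$ survives at top degree, and then read off the eigenvalue relation componentwise. Your argument is more explicit than the paper's in justifying why every commutator correction drops in filtration (the paper simply absorbs these into the symbol $o(m-1)$ without further comment), and you also pin down the sign precisely as $(-1)^{|x|\,|X^{s}|}$, which the paper leaves as an unspecified $\pm$.
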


\begin{proof}
Since $ v$ is a eigenvector of $x$ with the eigenvalue $a$,
we have
\begin{eqnarray*}
a v = x v
& = & \sum_{|s|\leq m}\pm X^{s}\otimes xv(s)\\
&&+  \sum_{|s|\leq m}\sum^{2n}_{i=1}\pm s_{i}X^{s-\varepsilon_{i}}\otimes [x, \mathrm{M}_{x_{i}}]v(s)\\
&&+  \sum_{|s|\leq m}\pm s_{2n+1}X^{s-\varepsilon_{2n+1}}\otimes [x, \mathrm{M}_{1}]v(s)
+o(m-1).
\end{eqnarray*}
Consequently,
$$a\sum_{|s|= m}X^{s}\otimes v(s)=\sum_{|s|= m}\pm X^{s}\otimes x v(s),$$
from which it follows that $x v(s)=\pm a v(s)$
for any $s$ with $|s|=m.$
\end{proof}

\section{Simple modules with nonsingular characters}

For convenience,
for $a_{i},b_{j}\in\frak{g}$ we put
$$
(a_{1}, \ldots, a_{k})^{\mathrm{t}}(b_{1}, \ldots, b_{l}):=
\begin{pmatrix}
[a_{1}, b_{1}] & [a_{1}, b_{2}] & \cdots & [a_{1}, b_{l}]\\
 [a_{2}, b_{1}] & [a_{2}, b_{2}] & \cdots & [a_{2}, b_{l}]\\
 \vdots & \vdots& \ddots & \vdots\\
 [a_{k}, b_{1}] & [a_{k}, b_{2}] & \cdots & [a_{k}, b_{l}]
\end{pmatrix}
$$
and
for $\chi\in\frak{g}_{\overline{0}}^{*}$
$$
\chi((a_{1}, \ldots, a_{k})^{\mathrm{t}}(b_{1}, \ldots, b_{l})):=
\begin{pmatrix}
\chi([a_{1}, b_{1}]) & \chi([a_{1}, b_{2}]) & \cdots & \chi([a_{1}, b_{l}])\\
 \chi([a_{2}, b_{1}]) & \chi([a_{2}, b_{2}]) & \cdots & \chi([a_{2}, b_{l}])\\
 \vdots & \vdots& \ddots & \vdots\\
 \chi([a_{k}, b_{1}]) & \chi([a_{k}, b_{2}]) & \cdots & \chi([a_{k}, b_{l}])
\end{pmatrix}.
$$

\begin{proposition}\label{si=0}
Let  $\chi\in\frak{g}_{\overline{0}}^{*}$ with $\mathrm{ht}(\chi)=h\geq 2,$
and $M$ a simple $u_{\chi}(\frak{g}^{0})$-module.
Then the following statements hold.

(1)
Every simple $u_{\chi}(\frak{g}^{h-1})$-submodule of
$K_{\chi}(M)$ is 1-dimensional.

(2)
Suppose
\begin{itemize}
\item
$\chi([\frak{g}_{[h+1]}, \frak{g}_{[-2]}])\neq 0;$
\item
there exist
 $f_{1}, f_{2}, \ldots, f_{r}\in \frak{g}_{[h]}$
for some $ 1\leq r\leq 2n$ such that
$$\chi((f_{1}, \ldots, f_{r})^{\mathrm{t}}(\mathrm{M}_{x_{i_{1}}}, \mathrm{M}_{x_{i_{2}}}, \ldots, \mathrm{M}_{x_{i_{2n}}}))=\left(A_{r}\mid 0\right)$$
with $A_{r}$  an invertible $r\times r$ matrix
and  $(i_{1}, i_{2}, \ldots, i_{2n})$  a rearrangement of
 $(1, 2, \ldots, 2n).$
\end{itemize}
Then for any  1-dimensional   $u_{\chi}(\frak{g}^{h-1})$-submodule $\mathbb{F} v$
of $K_{\chi}(M)$  with
$$ v =\sum_{|s|\leq m}X^{s}\otimes v(s),$$
 we have
$$s_{i_{1}}=s_{i_{2}}=\cdots=s_{i_{r}}=s_{2n+1}= 0$$
for all $s$ with $v(s)\neq 0$ and $|s|=m$.

\end{proposition}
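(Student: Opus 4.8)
The plan is to deduce (2) from (1) together with Lemma~\ref{maximal vector}, so I first dispose of (1) and extract the extra facts its proof gives.

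For (1) I would use that $\mathfrak{g}^{h-1}$ is a \emph{nilpotent} restricted Lie superalgebra: its descending central series is contained in $\mathfrak{g}^{h-1}\supseteq\mathfrak{g}^{2(h-1)}\supseteq\mathfrak{g}^{3(h-1)}\supseteq\cdots$, which terminates because the standard $\mathbb{Z}$-grading of $\mathfrak{g}$ is bounded; for the same reason it is $p$-nilpotent, the $p$-mapping being the associative $p$-power, homogeneous of $\mathbb{Z}$-degree scaled by $p$. Since $h\ge2$, $[\mathfrak{g}^{h-1},\mathfrak{g}^{h-1}]\subseteq\mathfrak{g}^{2h-2}\subseteq\mathfrak{g}^{h}$, and $\mathrm{ht}(\chi)=h$ gives $\chi(\mathfrak{g}^{h})=0$. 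Hence on any $u_{\chi}(\mathfrak{g}^{h-1})$-module the ideal $\mathfrak{g}^{h}$ acts by operators $x$ with $x^{p^{k}}=x^{[p^{k}]}+\chi(x)^{p^{k}}=0$ for $k\gg0$, so nilpotently; by Engel's theorem for Lie superalgebras a simple such module is killed by $\mathfrak{g}^{h}$, hence by $[\mathfrak{g}^{h-1},\mathfrak{g}^{h-1}]$, hence is a simple module over $u_{\bar\chi}(\mathfrak{g}^{h-1}/\mathfrak{g}^{h})$ — a supercommutative $\mathbb{F}$-algebra (its underlying Lie superalgebra is abelian) with nilpotent odd part and even part a product of local rings — so it is $1$-dimensional. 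Applying this to the simple submodules of $K_{\chi}(M)$ proves (1); running the same Engel argument for the simple $u_{\chi}(\mathfrak{g}^{0})$-module $M$ and the ideal $\mathfrak{g}^{h}\trianglelefteq\mathfrak{g}^{0}$ also yields $\mathfrak{g}^{h}M=0$, which I will use.

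For (2), fix a one-dimensional $u_{\chi}(\mathfrak{g}^{h-1})$-submodule $\mathbb{F}v$, write $v=\sum_{|s|\le m}X^{s}\otimes v(s)$ with $m$ maximal subject to some $v(s)\neq0$, $|s|=m$, and set $T=\{s:|s|=m,\ v(s)\neq0\}$. From (1), $\mathfrak{g}^{h}v=0$ and every homogeneous $x\in\mathfrak{g}^{h-1}$ acts on $\mathbb{F}v$ by the scalar $\chi(x)$ (with $\chi$ extended by $0$ on $\mathfrak{g}_{\bar1}$; for even $x$ this uses $x^{[p]}\in\mathfrak{g}^{p(h-1)}\subseteq\mathfrak{g}^{h}$). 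In particular $f_{a}v=gv=0$ for $f_{a}\in\mathfrak{g}_{[h]}$, $g\in\mathfrak{g}_{[h+1]}$ (both lie in $\mathfrak{g}^{h}$); the elements $[g,\mathrm{M}_{1}]$, $[f_{a},\mathrm{M}_{x_{i}}]$, $[[g,\mathrm{M}_{x_{i}}],\mathrm{M}_{x_{j}}]$ all lie in $\mathfrak{g}_{[h-1]}\subseteq\mathfrak{g}^{h-1}$, so act on $\mathbb{F}v$ by their $\chi$-values, and — having positive $\mathbb{Z}$-degree — the argument of Lemma~\ref{maximal vector} shows each of them acts on every $v(s)$, $s\in T$, as $\pm$ that scalar. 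Finally $f_{a},g\in\mathfrak{g}^{h}$ annihilate $M$, hence annihilate each $v(s)$.

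To handle $i_{1},\dots,i_{r}$: since $A_{r}$ is invertible, replace $(f_{1},\dots,f_{r})$ by $A_{r}^{-1}(f_{1},\dots,f_{r})^{\mathrm t}$, so that $\chi([f_{a},\mathrm{M}_{x_{i}}])=\delta_{i,i_{a}}$ for all $i$ (the zero block together with $(i_{1},\dots,i_{2n})$ being a permutation of $(1,\dots,2n)$ forces the off-values to vanish). Now expand $0=f_{a}v$ by PBW as in Lemma~\ref{maximal vector}: the no-bracket term $\sum\pm X^{s}\otimes f_{a}v(s)$ vanishes since $f_{a}$ kills $M$; a single bracket against a factor $\mathrm{M}_{1}$, or any two or more brackets, lowers $|\cdot|$ by at least $2$; so the $|\cdot|$-order-$(m-1)$ part is $\sum_{s\in T}\sum_{i}\pm s_{i}X^{s-\varepsilon_{i}}\otimes[f_{a},\mathrm{M}_{x_{i}}]v(s)=\sum_{s\in T}\pm s_{i_{a}}X^{s-\varepsilon_{i_{a}}}\otimes v(s)$ by the refinement above. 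As the monomials $X^{s-\varepsilon_{i_{a}}}$ ($s\in T$, $s_{i_{a}}\ge1$) are pairwise distinct and $s_{i_{a}}$ is a unit mod $p$ whenever nonzero, $v(s)=0$ for $s\in T$ with $s_{i_{a}}\neq0$; letting $a=1,\dots,r$ gives $s_{i_{1}}=\dots=s_{i_{r}}=0$ on $T$. The remaining index $2n+1$ is the hard part. Taking $g\in\mathfrak{g}_{[h+1]}$ with $\beta:=\chi([g,\mathrm{M}_{1}])\neq0$ and expanding $0=gv$, the no-bracket term vanishes and the one-bracket terms against $\mathrm{M}_{x_{i}}$ vanish since $[g,\mathrm{M}_{x_{i}}]\in\mathfrak{g}^{h}$ kills $M$; a bookkeeping of orders leaves, at $|\cdot|$-order $m-2$, the relation $\sum_{s\in T}\pm\beta s_{2n+1}X^{s-\varepsilon_{2n+1}}\otimes v(s)+\sum_{s\in T}\sum_{i,j}\pm c_{ij}(s)\chi([[g,\mathrm{M}_{x_{i}}],\mathrm{M}_{x_{j}}])X^{s-\varepsilon_{i}-\varepsilon_{j}}\otimes v(s)=0$, the double sum being (by the previous step) effectively over $i,j\in\{i_{r+1},\dots,i_{2n}\}$. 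Comparing coefficients expresses each top coefficient $v(s)$ with $s_{2n+1}=1$ in terms of top coefficients with $s_{2n+1}=0$; this coupling is exactly the main obstacle, since — unlike the $i_{b}$-case — the commutators reach $M$ only at order $m-2$, where the quadratic terms $[[g,\mathrm{M}_{x_{i}}],\mathrm{M}_{x_{j}}]$ intervene. I expect to untangle it by induction on $m$ (the $i_{b}$-step having confined the level-$m$ support to a lattice of codimension $r+1$) together with the identity $g\mathrm{M}_{1}v=\beta v$, which shows $\mathrm{M}_{1}v\neq0$ whenever $v\neq0$ and locates the top of $\mathrm{M}_{1}v$ precisely on the $s_{2n+1}=0$ part of $v$; peeling off the factor $\mathrm{M}_{1}$ should strictly decrease the number of $s\in T$ with $s_{2n+1}=1$, and iterating forces them all to vanish, i.e. $s_{2n+1}=0$ on $T$.
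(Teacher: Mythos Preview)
Your argument for (1) is fine and matches the paper's. The gap is in (2), and it traces to a single miscount: the claim that ``a single bracket against a factor $\mathrm{M}_{1}$ \dots lowers $|\cdot|$ by at least $2$'' is false under the paper's definition $|s|=\sum_{i}s_{i}+\sum_{i}s_{i'}+s_{2n+1}$; removing one $\mathrm{M}_{1}$-factor drops $|s|$ by exactly $1$ (you are conflating $|s|$ with the standard $\mathbb{Z}$-degree, where $\mathrm{M}_{1}$ has degree $-2$). Consequently, in your $f_{a}$-expansion the degree-$(m-1)$ part contains, in addition to the $\mathrm{M}_{x_{i}}$-terms you kept, the term $\sum_{s\in T}\pm s_{2n+1}X^{s-\varepsilon_{2n+1}}\otimes[f_{a},\mathrm{M}_{1}]v(s)$, and $[f_{a},\mathrm{M}_{1}]\in\mathfrak{g}_{[h-2]}$ neither lies in $\mathfrak{g}^{h}$ nor in $\mathfrak{g}^{h-1}$, so it does not kill $M$ and does not act by a scalar. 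These extra terms collide with your $X^{s-\varepsilon_{i_{a}}}$-terms (take $t=s-\varepsilon_{i_{a}}$ with $t_{2n+1}=0$; then $X^{t}=X^{(t+\varepsilon_{2n+1})-\varepsilon_{2n+1}}$), so your coefficient comparison does not isolate $s_{i_{a}}v(s)$.

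The fix is simply to reverse the order---and then the ``hard part'' evaporates. Apply $g\in\mathfrak{g}_{[h+1]}$ \emph{first}: you correctly note $gM=0$ and $[g,\mathrm{M}_{x_{i}}]\in\mathfrak{g}_{[h]}\subseteq\mathfrak{g}^{h}$ kills $M$; hence at degree $m-1$ (not $m-2$) the \emph{only} surviving contribution in $0=gv$ is $\sum_{s\in T}\pm s_{2n+1}X^{s-\varepsilon_{2n+1}}\otimes[g,\mathrm{M}_{1}]v(s)$, and since $[g,\mathrm{M}_{1}]\in\mathfrak{g}_{[h-1]}$ acts on each $v(s)$ by $\pm\chi([g,\mathrm{M}_{1}])\neq0$, this forces $s_{2n+1}=0$ on $T$ immediately---no double brackets, no induction, no peeling. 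With $s_{2n+1}=0$ in hand, the troublesome $[f_{a},\mathrm{M}_{1}]$-term in the $f_{a}$-expansion carries the prefactor $s_{2n+1}=0$ and disappears, and your argument for $s_{i_{1}}=\cdots=s_{i_{r}}=0$ then goes through exactly as you wrote it. This is precisely the paper's route.
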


\begin{proof}
(1)Let $V$ be a simple $u_{\chi}(\frak{g}^{h-1})$-submodule
of $K_{\chi}(M)$.
Then
$\chi([\frak{g}^{h-1}, \frak{g}^{h-1}])=0$
by $\mathrm{ht}(\chi)=h\geq 2$ and
$[\frak{g}^{h-1}, \frak{g}^{h-1}]\subseteq\frak{g}^{h}.$
Since  the filtration associated with the standard
$\mathbb{Z}$-grading is restricted,  every element of
$[\frak{g}^{h-1}, \frak{g}^{h-1}]$ acts nilpotently on $V$.
By \cite[Lemma 3.3]{Y2},
$V$ is 1-dimensional.

(2)Let $V=\mathbb{F} v$ be any  1-dimensional   $u_{\chi}(\frak{g}^{h-1})$-submodule
of $K_{\chi}(M)$  with $ v =\sum_{|s|\leq m}X^{s}\otimes v(s).$
Since $\chi([\frak{g}_{[h+1]}, \frak{g}_{[-2]}])\neq 0,$
there exists an element $g$ of $\frak{g}_{[h+1]}$ such that
$[g, \mathrm{M}_{1}]\neq 0.$
Applying $g$  to $ v\in V$   and considering that
 $g\in\frak{g}_{[h+1]}$ acts nilpotently on $V$, we have
\begin{eqnarray*}
0 = g v
& = & \sum_{|s|\leq m}\pm X^{s}\otimes gv(s)+ \sum_{|s|\leq m}\sum^{2n}_{i=1}\pm s_{i}X^{s-\varepsilon_{i}}\otimes [g, \mathrm{M}_{x_{i}}]v(s)\\
&& + \sum_{|s|\leq m}\pm s_{2n+1}X^{s-\varepsilon_{2n+1}}\otimes [g, \mathrm{M}_{1}]v(s)
+o(m-1).
\end{eqnarray*}
Note that both $\frak{g}^{h}$ and $\frak{g}^{h+1}$
are $p$-unipotent restricted ideals of $\frak{g}^{0}$
and $\mathrm{ht}(\chi)=h.$
By \cite[Lemma 3.5]{Y2},
we have $gM=0 \mbox{\;and\;} [g, \mathrm{M}_{x_{i}}]M=0.$
Hence, we have
$$\sum_{|s|= m}\pm s_{2n+1}X^{s-\varepsilon_{2n+1}}\otimes [g, \mathrm{M}_{1}]v(s)=0.$$
Since $[g, \mathrm{M}_{1}]\in \frak{g}_{[h-1]}\subseteq\frak{g}^{h-1},$
$[g, \mathrm{M}_{1}]$ acts on $ v$ by a scalar.
And by $[g, \mathrm{M}_{1}]^{[p]^{k}}=0$ for some $k\in \mathbb{N}$,
we get that $[g, \mathrm{M}_{1}]v=\chi([g, \mathrm{M}_{1}]).$
Then by Lemma \ref{maximal vector},
we have that $[g, \mathrm{M}_{1}]v(s)=\chi([g, \mathrm{M}_{1}])v(s)$ or $-\chi([g, \mathrm{M}_{1}])v(s)$
for all $s$ with  $v(s)\neq 0$ and $|s|=m.$
It follows that
$\sum_{|s|= m}\pm s_{2n+1}X^{s-\varepsilon_{2n+1}}\otimes \chi([g, \mathrm{M}_{1}])v(s)=0.$
Consequently, we get that
$ s_{2n+1}\chi([g, \mathrm{M}_{1}])v(s)=0$ for $s$ with  $|s|=m.$
This implies that $s_{2n+1}=0$ for all $s$ with $v(s)\neq 0$ and   $|s|=m.$
Hence we obtain that $ v=\sum_{|s|\leq m}X^{s}\otimes v(s),$
where  $s_{2n+1}=0$
for all $s$ with  $|s|=m.$

Without loss of generality,
we may assume that
$$\chi((f_{1}, \ldots, f_{r})^{\mathrm{t}}(\mathrm{M}_{x_{1}}, \ldots, \mathrm{M}_{x_{2n}}))=\left(A_{r}\mid 0\right).$$
For $j=1, \ldots, r$,
applying $f_{j}$  to $ v\in V$ and considering that
 $f_{j}\in\frak{g}_{[h]}$ acts nilpotently on $V$, we have
 \begin{eqnarray*}
0 = f_{j} v
&=&  \sum_{|s|\leq m}\pm X^{s}\otimes f_{j}v(s)+ \sum_{|s|\leq m}\sum^{2n}_{i=1}\pm s_{i}X^{s-\varepsilon_{i}}\otimes [f_{j}, \mathrm{M}_{x_{i}}]v(s)\\
&&+ \sum_{|s|\leq m}\pm s_{2n+1}X^{s-\varepsilon_{2n+1}}\otimes [f_{j}, \mathrm{M}_{1}]v(s)+o(m-1).
\end{eqnarray*}
Note that $\frak{g}^{h}$
is a  $p$-unipotent restricted ideal of $\frak{g}^{0}$
and $\mathrm{ht}(\chi)=h.$
Then by \cite[Lemma 3.5]{Y2},
we have $\frak{g}^{h}M=0.$
Then, keeping in mind that $s_{2n+1}=0$ for all $s$ with $v(s)\neq 0$ and   $|s|=m,$
we have
$$f_{j} v=\sum_{|s|= m}\sum^{2n}_{i=1}\pm s_{i}X^{s-\varepsilon_{i}}\otimes [f_{j}, \mathrm{M}_{x_{i}}]v(s)+o(m-1).$$
Hence
 $\sum_{|s|= m}\sum^{2n}_{i=1}\pm s_{i}X^{s-\varepsilon_{i}}\otimes [f_{j}, \mathrm{M}_{x_{i}}]v(s)=0.$
A similar  argument shows that
$[f_{j}, \mathrm{M}_{x_{i}}]v(s)=\chi([f_{j}, \mathrm{M}_{x_{i}}])v(s)$ for all $s$ with $|s|=m.$
It follows that
\begin{equation}\label{equality}
\sum_{|s|=m}\sum^{2n}_{i=1}\pm s_{i}X^{s-\varepsilon_{i}}\otimes\chi([f_{j}, \mathrm{M}_{x_{i}}])v(s)=0
\end{equation}
for every $j=1, \ldots, r.$
We claim that $s_{i}=0$ for all $i=1, \ldots, r$ and $s$ with $v(s)\neq 0$ and  $|s|=m.$
Indeed, by (\ref{equality}), for every $j=1, \ldots, r$
we have
$$\sum_{|t|=m-1}\sum^{r}_{i=1}\pm (t_{i}+1)X^{\mathrm{t}}\otimes\chi([f_{j}, \mathrm{M}_{x_{i}}])u(t+\varepsilon_{i})=0.$$
Note that  the matrix
$\chi((f_{1}, \ldots, f_{r})^{\mathrm{t}}(\mathrm{M}_{x_{1}}, \ldots, \mathrm{M}_{x_{r}}))$
is invertible.
One sees that
$(t_{i}+1)u(t+\varepsilon_{i})=0$
for every $i=1, \ldots, r.$
Therefore, we obtain that $s_{i}v(s)=0$ for all $s$
with $v(s)\neq 0$ and $|s|=m$,
where $i=1, \ldots, r.$
Hence
$s_{1}=s_{2}=\cdots=s_{r}=0$ for all $s$ with $v(s)\neq 0$ and $|s|=m.$
\end{proof}

Similar to \cite[p. 413]{ZH(Cartan-type)}, we introduce the following definition.

\begin{definition}\label{1439}
Let $\chi\in\frak{g}_{\overline{0}}^{*}$ with $\mathrm{ht}(\chi)=h\geq 2.$

(1)
The  $\mathrm{rank}(\chi)$ of $\chi$ is defined
to be $\mathrm{rank}(\chi(A))$ for any matrix
$$A:=(f_{1}, \ldots, f_{k})^{\mathrm{t}}(\mathrm{M}_{x_{1}}, \ldots, \mathrm{M}_{x_{2n}})
\oplus(g_{1}, \ldots, g_{l})^{\mathrm{t}}(\mathrm{M}_{1}),$$
where $\{f_{1}, \ldots, f_{k}\}$ is a basis of $\frak{g}_{[h]}$
and $\{g_{1}, \ldots, g_{l}\}$
is  a basis of  $\frak{g}_{[h+1]}$.

(2)
$\chi$ is called a nonsingular (resp. singular) $p$-character  if  $\mathrm{rank} (\chi)=2n+1$ (resp. $\mathrm{rank} (\chi)\neq 2n+1$).

\end{definition}

Note that
$\mathrm{rank} (\chi)$  is
independent of the choice of the base for $\frak{g}_{[h]}$
and $\frak{g}_{[h+1]}$.
Below we give two  examples  about  nonsingular and singular $p$-characters.

\begin{example}
Let $\chi\in\frak{m}_{\bar{0}}^{*}$ with $2\leq \mathrm{ht}(\chi)=h<p-2$.
For  $n+1\leq i\leq 2n-1$, write
$$
\Sigma_{i}:= \{\mathrm{M}_{x^{(\underline{s})}}\in\frak{m}_{[h-1]}\mid
\chi(\mathrm{M}_{x^{(\underline{s})}})\neq 0, s_{n+1}=s_{n+2}=\cdots=s_{i}=0\},
$$
and for $j=2n, 2n+1,$ write
$$
\Sigma_{j}:= \{\mathrm{M}_{x^{(\underline{s})}}\in\frak{m}_{[h-1]}\mid
\chi(\mathrm{M}_{x^{(\underline{s})}})\neq 0, s_{j}=0\}.
$$
If every $\Sigma_{k}$ for  $n+1\leq k\leq 2n+1$ is not empty, then
$\chi$ is nonsingular.
\end{example}

\begin{proof}
Define a partial order on the  standard basis  of
$\frak{m}_{[h-1]}$ as follows:
$$\mathrm{M}_{x^{(\underline{r})}}<\mathrm{M}_{x^{(\underline{s})}}\Leftrightarrow
r_{2n+1}<s_{2n+1} \mbox{\;or\; } r_{2n+1}=s_{2n+1}, r_{1}=s_{1}, \ldots, r_{i-1}=s_{i-1}, r_{i}>s_{i}$$
for some $i=1, \ldots, 2n $,
where
$\|\underline{r}\|=\|\underline{s}\|=h+1.$
For each $i=n+1, \ldots, 2n+1$, let $\mathrm{M}_{x^{(\underline{r^{i}})}}$ be a minimal element of the set $\Sigma_{i}$
 under this partial order.
Denote by $\chi(\mathrm{M}_{x^{(\underline{r^{i}})}})=c_{i}\neq 0.$
Note that $\|\mathrm{M}_{x^{(\underline{r^{i}})}}\| =h-1<p-3$ and
so $r^{i}_{j}<p-1$ for every $j=1, \ldots, n$ and $i=n+1, \ldots, 2n+1.$
For simplicity, let $\mathrm{M}_{x^{(\underline{r^{k}})}}:=\mathrm{M}_{x^{(\underline{r^{n+1}})}}$ for each $k=1, \ldots, n$.
Hence the following matrix is nonzero:
\begin{eqnarray*}
A'&=&\chi((\mathrm{M}_{x^{(\underline{r^{1}}+\varepsilon_{1})}}, \ldots,
\mathrm{M}_{x^{(\underline{r^{2n}}+\varepsilon_{2n})}})^{\mathrm{t}}
(\mathrm{M}_{x_{n+1}}, \ldots, \mathrm{M}_{x_{2n}},
\mathrm{M}_{x_{1}}, \ldots, \mathrm{M}_{x_{n}}))\\
&&\oplus\chi([\mathrm{M}_{x^{(\underline{r^{2n+1}}+\varepsilon_{2n+1})}}, \mathrm{M}_{1}])
\end{eqnarray*}
By the definition of $\Sigma_{i}$ and the minimality of $\mathrm{M}_{x^{(\underline{r^{i}})}}$ in $\Sigma_{i}$,
one sees that $A'$ is a lower triangular matrix with
the diagonal entries $\pm c_{n+1}, \ldots, \pm c_{2n+1}$.
Hence $\chi$ is nonsingular.
\end{proof}

\begin{example}
Fix $h= p-2$. We take  $\chi\in \frak{m}_{\bar{0}}^{*}$ such that
\begin{enumerate}
\item[(1)] $\chi|_{\frak{g}^{h}}=0$;
\item[(2)] $\chi(\mathrm{M}_{x^{(\underline{r})}})=\delta_{r_{2n}, p-1}$ with $\| x^{(\underline{r})}\|=h+1$;
\item[(3)] $\chi|_{\frak{g}_{[i]}}=0$ for every $2\leq i\leq h-2$.
\end{enumerate}
Then it is easy to check that
$\chi([\frak{g}_{[h]}, \mathrm{M}_{x_{n}}])=0,$
i.e., $\chi$ is singular.
\end{example}

\begin{corollary}\label{1609}

Let $\chi\in\frak{g}_{\overline{0}}^{*}$ and $M$   a simple $u_{\chi}(\frak{g}^{0})$-module.
If $\chi$ is nonsingular,
then $1\otimes M$ is the unique  simple $u_{\chi}(\frak{g}^{0})$-submodule of   $K_{\chi}(M)$.

\end{corollary}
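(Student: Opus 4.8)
The plan is to deduce the statement from Proposition~\ref{si=0}, exploiting that nonsingularity of $\chi$ furnishes exactly the hypotheses of its part~(2) with $r=2n$. First I note that, since $\chi$ is nonsingular, Definition~\ref{1439} forces $\mathrm{ht}(\chi)=h\geq 2$, so Proposition~\ref{si=0} is available. Moreover $1\otimes M$ is a $u_{\chi}(\frak{g}^{0})$-submodule of $K_{\chi}(M)$ isomorphic to $M$ as a $u_{\chi}(\frak{g}^{0})$-module, hence simple; so it remains only to show that every simple $u_{\chi}(\frak{g}^{0})$-submodule of $K_{\chi}(M)$ coincides with $1\otimes M$.

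To assemble the data needed for Proposition~\ref{si=0}(2), I would argue as follows. By Definition~\ref{1439} the matrix $\chi(A)$ is block-diagonal, with first block $B=\chi((f_{1},\ldots,f_{k})^{\mathrm{t}}(\mathrm{M}_{x_{1}},\ldots,\mathrm{M}_{x_{2n}}))$ of $2n$ columns ($\{f_{i}\}$ a basis of $\frak{g}_{[h]}$) and second block $C=\chi((g_{1},\ldots,g_{l})^{\mathrm{t}}(\mathrm{M}_{1}))$ of one column ($\{g_{j}\}$ a basis of $\frak{g}_{[h+1]}$). Since rank is additive over block-diagonal matrices, and $\mathrm{rank}(B)\leq 2n$, $\mathrm{rank}(C)\leq 1$, the hypothesis $\mathrm{rank}(\chi)=2n+1$ forces $\mathrm{rank}(B)=2n$ and $\mathrm{rank}(C)=1$. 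From $\mathrm{rank}(C)=1$ one gets $\chi([\frak{g}_{[h+1]},\frak{g}_{[-2]}])\neq0$, since $\frak{g}_{[-2]}=\mathbb{F}\,\mathrm{M}_{1}$. From $\mathrm{rank}(B)=2n$ the block $B$ has $2n$ linearly independent rows, so after relabelling the basis of $\frak{g}_{[h]}$ one finds $f_{1},\ldots,f_{2n}\in\frak{g}_{[h]}$ with $\chi((f_{1},\ldots,f_{2n})^{\mathrm{t}}(\mathrm{M}_{x_{1}},\ldots,\mathrm{M}_{x_{2n}}))$ an invertible $2n\times2n$ matrix. These are precisely the two bullets of Proposition~\ref{si=0}(2) with $r=2n$, $(i_{1},\ldots,i_{2n})=(1,\ldots,2n)$, and the trailing zero block vacuous.

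Now I would take an arbitrary simple $u_{\chi}(\frak{g}^{0})$-submodule $W$ of $K_{\chi}(M)$. Since $h-1\geq1$ we have $\frak{g}^{h-1}\subseteq\frak{g}^{0}$, so $W$ is a nonzero finite-dimensional $u_{\chi}(\frak{g}^{h-1})$-module and therefore contains a simple $u_{\chi}(\frak{g}^{h-1})$-submodule, which by Proposition~\ref{si=0}(1) is $1$-dimensional, say $\mathbb{F}v$. Writing $v=\sum_{|s|\leq m}X^{s}\otimes v(s)$ with $m$ maximal such that $v(s)\neq0$ for some $s$ of degree $m$, Proposition~\ref{si=0}(2) applied with the data just obtained yields $s_{1}=\cdots=s_{2n}=s_{2n+1}=0$ for every $s$ with $v(s)\neq0$ and $|s|=m$. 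By maximality at least one such $s$ exists, and it must then be the zero tuple; hence $m=0$ and $v=1\otimes v(0)\in 1\otimes M$. Thus $\mathbb{F}v\subseteq(1\otimes M)\cap W$, which is nonzero; since $1\otimes M$ is a simple $u_{\chi}(\frak{g}^{0})$-module this gives $(1\otimes M)\cap W=1\otimes M$, so $1\otimes M\subseteq W$, and simplicity of $W$ forces $W=1\otimes M$.

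I expect the only genuine difficulty to lie in the middle step: translating $\mathrm{rank}(\chi)=2n+1$ into the assertion that the $\frak{g}_{[h]}$-block is of full rank $2n$ while the $\mathrm{M}_{1}$-column is nonzero, so that both bullets of Proposition~\ref{si=0}(2) hold at once with $r=2n$. Once this is in place, the conclusion $m=0$ and the uniqueness argument are entirely formal.
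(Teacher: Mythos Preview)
Your proof is correct and follows essentially the same approach as the paper: extract from nonsingularity the two hypotheses of Proposition~\ref{si=0}(2) with $r=2n$, apply that proposition to a simple $u_{\chi}(\frak{g}^{h-1})$-submodule of an arbitrary simple $u_{\chi}(\frak{g}^{0})$-submodule to force $m=0$, and conclude by simplicity. Your version is in fact somewhat more careful than the paper's in unpacking the block-rank argument and in the final inclusion step, but the strategy is identical.
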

\begin{proof}
Let $V$ be a simple $u_{\chi}(\frak{g}^{0})$-submodule
of $K_{\chi}(M)$, and $V'$ a simple $u_{\chi}(\frak{g}^{h-1})$-submodule
of $V.$
By Definition \ref{1439}, $\mathrm{ht}(\chi)\geq 2$
and there exist   $f_{1}, \ldots, f_{2n}\in\frak{g}_{[h]}$
and $g\in \frak{g}_{[h+1]},$ such that
$\chi(A)=\chi((f_{1}, \ldots, f_{2n})^{\mathrm{t}}(\mathrm{M}_{x_{1}}, \ldots, \mathrm{M}_{x_{2n}}))$
is invertible and $\chi([g, \mathrm{M}_{1}])\neq 0.$
Then it follows from  Proposition \ref{si=0} that  $V'=\mathbb{F} v,$
where $ v=\sum_{|s|\leq m}X^{s}\otimes v(s)$
 with $s_{i}=0, i=1, \ldots, 2n+1$ for all $s$ with $|s|=m.$
This implies  $m=0,$
that is,   $ v\in 1\otimes M$.
Consequently,   $1\otimes M=V$, since both
 $1\otimes M$ and $V$ are simple $\frak{g}^{0}$-modules.
Thus  $1\otimes M$ is the unique simple $u_{\chi}(\frak{g}^{0})$-submodule of $K_{\chi}(M)$.
\end{proof}

We are in the position to state the main result of this section, which characterizes all simple $\frak{g}$-modules of nonsingular  $p$-characters.
\begin{theorem}\label{theorem1}
Suppose $\chi\in\frak{g}_{\overline{0}}^{*}$ is nonsingular.
Then the following statements hold.

(1)
$K_{\chi}(M)$ is simple for any simple $u_{\chi}(\frak{g}^{0})$-module $M$.

(2)
For any two simple $u_{\chi}(\frak{g}^{0})$-modules $M$ and $N$,
$$K_{\chi}(M)\cong K_{\chi}(N)\Longleftrightarrow M\cong N.$$

(3) Any simple $u_{\chi}(\frak{g})$-module is isomorphic to
some $\chi$-reduced Kac module $K_{\chi}(M)$ with $M$ some simple $u_{\chi}(\frak{g}^{0})$-module.

\end{theorem}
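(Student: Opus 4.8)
The plan is to deduce all three statements formally from Corollary~\ref{1609}, which already packages the only genuinely hard input for nonsingular $\chi$, namely that $1\otimes M$ is the \emph{unique} simple $u_{\chi}(\frak{g}^{0})$-submodule of $K_{\chi}(M)$. The two supplementary ingredients are the super-PBW decomposition $u_{\chi}(\frak{g})\cong u_{\chi}(\frak{g}_{[-1]}\oplus\frak{g}_{[-2]})\otimes_{\mathbb{F}}u_{\chi}(\frak{g}^{0})$, exhibiting $u_{\chi}(\frak{g})$ as a free right $u_{\chi}(\frak{g}^{0})$-module, and the Frobenius reciprocity it yields for the induction functor $K_{\chi}=u_{\chi}(\frak{g})\otimes_{u_{\chi}(\frak{g}^{0})}-$; here $\frak{g}^{0}$ plays the role of a parabolic and $K_{\chi}(M)$ that of a generalized Verma (Kac) module.

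For (1), I would take a nonzero $u_{\chi}(\frak{g})$-submodule $W\subseteq K_{\chi}(M)$, view it as a finite-dimensional $u_{\chi}(\frak{g}^{0})$-module, and choose inside it a simple $u_{\chi}(\frak{g}^{0})$-submodule $V$. Then $V$ is a simple $u_{\chi}(\frak{g}^{0})$-submodule of $K_{\chi}(M)$, so $V=1\otimes M$ by Corollary~\ref{1609}; since $1\otimes M$ generates $K_{\chi}(M)$ over $u_{\chi}(\frak{g})$, this gives $W=K_{\chi}(M)$, proving simplicity. For (2), the implication $M\cong N\Rightarrow K_{\chi}(M)\cong K_{\chi}(N)$ is immediate from functoriality of induction; conversely a $u_{\chi}(\frak{g})$-isomorphism $\varphi\colon K_{\chi}(M)\to K_{\chi}(N)$ restricts to a $u_{\chi}(\frak{g}^{0})$-isomorphism, so $\varphi(1\otimes M)$ is a simple $u_{\chi}(\frak{g}^{0})$-submodule of $K_{\chi}(N)$ and hence equals $1\otimes N$ by Corollary~\ref{1609}, whence $M\cong 1\otimes M\cong 1\otimes N\cong N$. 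For (3), given a simple $u_{\chi}(\frak{g})$-module $S$, which is finite-dimensional, I would pick a simple $u_{\chi}(\frak{g}^{0})$-submodule $M\subseteq S$; Frobenius reciprocity converts the inclusion $M\hookrightarrow S$ into a $u_{\chi}(\frak{g})$-homomorphism $\psi\colon K_{\chi}(M)\to S$ with $\psi(1\otimes m)=m$, hence nonzero, hence surjective since $S$ is simple, and injective since $K_{\chi}(M)$ is simple by (1); so $\psi$ is an isomorphism.

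The substantive analysis having been done in Lemma~\ref{maximal vector}, Proposition~\ref{si=0} and Corollary~\ref{1609}, the only points to be careful about when writing this up are the $\mathbb{Z}_{2}$-graded and sign bookkeeping: one should check that the super-PBW theorem genuinely makes $u_{\chi}(\frak{g})$ a free right $u_{\chi}(\frak{g}^{0})$-module, so that the induction--restriction adjunction is valid, and that the canonical identification $1\otimes M\cong M$ of $u_{\chi}(\frak{g}^{0})$-modules (used in (2)) respects parity. I do not expect a new structural obstacle here; the real difficulty of the section was proving the nonsingular-rigidity statement of Proposition~\ref{si=0}, which this theorem merely exploits.
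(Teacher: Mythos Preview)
Your proposal is correct and is essentially the paper's own argument: all three parts are deduced from Corollary~\ref{1609} together with the universal property of induction, exactly as in the paper. The only cosmetic differences are that you spell out the step of choosing a simple $u_{\chi}(\frak{g}^{0})$-submodule inside $W$ in part~(1) and phrase part~(3) via Frobenius reciprocity, whereas the paper simply speaks of the ``canonical surjective morphism''; neither changes the substance.
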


\begin{proof}
(1)
Let $K$ be any nonzero $u_{\chi}(\frak{g})$-submodule of   $K_{\chi}(M)$.
Then by Corollary \ref{1609}, $1\otimes M$ is also a simple $u_{\chi}(\frak{g}^{0})$-submodule of $K$.
Hence    $u_{\chi}(\frak{g})(1\otimes M)\subseteq K.$
Note that $u_{\chi}(\frak{g})(1\otimes M)=K_{\chi}(M),$
which forces $K=K_{\chi}(M)$.
Hence $K_{\chi}(M)$ is simple.

(2)
Assume that $K_{\chi}(M)\cong K_{\chi}(N).$
 By Corollary \ref{1609}, $1\otimes M$ (resp. $1\otimes N$) is the unique simple $u_{\chi}(\frak{g}^{0})$-submodule
of $K_{\chi}(M)$ (resp. $K_{\chi}(N)$).
Consequently,   $M\cong N$.

(3)
Let $A$ be any simple  $u_{\chi}(\frak{g})$-module.
Take a simple $u_{\chi}(\frak{g}^{0})$-submodule
of $A,$ denoted by $M.$
Hence there exists a canonical surjective morphism
$$\varphi: K_{\chi}(M)=u_{\chi}(\frak{g})\otimes_{u_{\chi}(\frak{g}^{0})}M\longrightarrow A.$$
Since both $K_{\chi}(M)$ and $A$ are simple $u_{\chi}(\frak{g})$-modules,
$\varphi$ is an isomorphism.
\end{proof}

\section{Simple modules of singular $p$-characters}
Let $\mathrm{Aut}(\frak{g})$ be the automorphism group of $\frak{g}.$
Then $\mathrm{Aut}(\frak{g})$ acts naturally on $\frak{g}^{*}$
by the coadjoint action, i.e., if $\Phi\in \mathrm{Aut}(\frak{g}),$
then
$(\Phi\cdot\chi)(x)=\chi(\Phi^{-1}x)$
for any $\chi\in\frak{g}_{\overline{0}}^{*}, x\in\frak{g}.$
Similar to the case of Lie algebras of Cartan type,
$\mathrm{Aut}(\frak{g})$ preserves the filtration of $\frak{g}$
and
it is easy to check that $\mathrm{ht}(\chi)=\mathrm{ht}(\Phi\cdot\chi)$
for any $\Phi\in \mathrm{Aut}(\frak{g})$ and $\chi\in\frak{g}_{\overline{0}}^{*}.$
For any $u_{\chi}(\frak{g})$-module $M$, let
$M^{\Phi}$ be   $\frak{g}$-module having
$M$ as its underlying vector space and
$\frak{g}$-action given by $x\cdot m=(\Phi^{-1}x) m,$
for any $x\in \frak{g}$ and $m\in M$,
where the action on the right is the given one.
Then $M^{\Phi}$ is simple if and only if $M$ is simple.
If $M$ has a $p$-character  $\chi$, then $M^{\Phi}$
has the $p$-character  $\Phi\cdot\chi.$
Moreover, similar to the case of Lie algebras of Cartan type,
$\mathrm{Aut}(\frak{g})=\mathrm{Aut}^{*}(\frak{g})\ltimes \mathrm{Aut}_{1}(\frak{g}),$
where
$$\mathrm{Aut}^{*}(\frak{g})=\{\Phi\in \mathrm{Aut}(\frak{g})\mid \Phi(\frak{g}_{[i]})=\frak{g}_{[i]}, \forall i\}$$
and
$$\mathrm{Aut}_{1}(\frak{g})=\{\Phi\in \mathrm{Aut}(\frak{g})\mid (\Phi-\mathrm{id}_{\frak{g}})(\frak{g}_{[i]})\subseteq\frak{g}^{i+1}, \forall i\}.$$

\begin{proposition}\label{rank=}
Let $\chi\in\frak{g}_{\overline{0}}^{*}$ and $\Phi\in \mathrm{Aut}(\frak{g}).$
Then $\rank (\Phi\cdot \chi)=\rank \chi.$
\end{proposition}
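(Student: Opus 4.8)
The plan is to split $\Phi$ into its two ``layers'' and handle them separately. Put $h=\mathrm{ht}(\chi)$; recall that $\mathrm{ht}(\Phi\cdot\chi)=h$, so that both $\chi$ and $\Phi\cdot\chi$ vanish on $\frak{g}^{h}$ and their ranks are both defined using $\frak{g}_{[h]}$ and $\frak{g}_{[h+1]}$. Fix bases $\{f_{1},\ldots,f_{k}\}$ of $\frak{g}_{[h]}$ and $\{g_{1},\ldots,g_{l}\}$ of $\frak{g}_{[h+1]}$, and let $A$ be the associated matrix of Definition~\ref{1439}, so $\rank\chi=\rank\chi(A)$. Using $\mathrm{Aut}(\frak{g})=\mathrm{Aut}^{*}(\frak{g})\ltimes\mathrm{Aut}_{1}(\frak{g})$, decompose $\Phi=\Phi_{0}\Psi$ with $\Phi_{0}\in\mathrm{Aut}^{*}(\frak{g})$ and $\Psi\in\mathrm{Aut}_{1}(\frak{g})$; since the coadjoint action is a left action, $\Phi\cdot\chi=\Phi_{0}\cdot(\Psi\cdot\chi)$, so it is enough to prove $\rank(\Phi\cdot\chi)=\rank\chi$ in the two cases $\Phi\in\mathrm{Aut}_{1}(\frak{g})$ and $\Phi\in\mathrm{Aut}^{*}(\frak{g})$.

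If $\Phi\in\mathrm{Aut}_{1}(\frak{g})$, I would show $(\Phi\cdot\chi)(A)=\chi(A)$ outright. Since $\Phi^{-1}\in\mathrm{Aut}_{1}(\frak{g})$, we have $\Phi^{-1}f_{i}\in f_{i}+\frak{g}^{h+1}$, $\Phi^{-1}g_{i}\in g_{i}+\frak{g}^{h+2}$, $\Phi^{-1}\mathrm{M}_{x_{j}}\in\mathrm{M}_{x_{j}}+\frak{g}^{0}$ and $\Phi^{-1}\mathrm{M}_{1}\in\mathrm{M}_{1}+\frak{g}^{-1}$. Expanding $[\Phi^{-1}f_{i},\Phi^{-1}\mathrm{M}_{x_{j}}]$ and $[\Phi^{-1}g_{i},\Phi^{-1}\mathrm{M}_{1}]$ bilinearly, every summand other than $[f_{i},\mathrm{M}_{x_{j}}]$, respectively $[g_{i},\mathrm{M}_{1}]$, lies in $\frak{g}^{h}$, using $[\frak{g}_{[h]},\frak{g}^{0}]\subseteq\frak{g}^{h}$, $[\frak{g}^{h+1},\frak{g}^{-1}]\subseteq\frak{g}^{h}$, $[\frak{g}_{[h+1]},\frak{g}^{-1}]\subseteq\frak{g}^{h}$, $[\frak{g}^{h+2},\frak{g}^{-2}]\subseteq\frak{g}^{h}$ and $\frak{g}^{h+1}\subseteq\frak{g}^{h}$. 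As $\chi$ kills $\frak{g}^{h}$, this gives $(\Phi\cdot\chi)([f_{i},\mathrm{M}_{x_{j}}])=\chi([f_{i},\mathrm{M}_{x_{j}}])$ and $(\Phi\cdot\chi)([g_{i},\mathrm{M}_{1}])=\chi([g_{i},\mathrm{M}_{1}])$, i.e. $(\Phi\cdot\chi)(A)=\chi(A)$, whence $\rank(\Phi\cdot\chi)=\rank\chi$.

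If $\Phi\in\mathrm{Aut}^{*}(\frak{g})$, then $\Phi^{-1}$ preserves each graded component, so $\{\Phi^{-1}f_{i}\}$, $\{\Phi^{-1}g_{i}\}$ and $\{\Phi^{-1}\mathrm{M}_{x_{j}}\}$ are again bases of $\frak{g}_{[h]}$, $\frak{g}_{[h+1]}$ and $\frak{g}_{[-1]}$, while $\Phi^{-1}\mathrm{M}_{1}$ spans $\frak{g}_{[-2]}$. Applying $\Phi\cdot\chi$ entrywise to a bracket matrix $B$ equals applying $\chi$ to the bracket matrix of the $\Phi^{-1}$-images, so $(\Phi\cdot\chi)(A)=\chi(A')$, where $A'$ is the matrix of Definition~\ref{1439} formed from these transported bases. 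By the remark after Definition~\ref{1439}, $\rank\chi$ does not depend on the chosen bases of $\frak{g}_{[h]}$ and $\frak{g}_{[h+1]}$; the analogous independence with respect to the negative bases is elementary, since replacing $(\mathrm{M}_{x_{1}},\ldots,\mathrm{M}_{x_{2n}})$ by another basis right-multiplies the first block of $A'$ by an invertible matrix, replacing $\mathrm{M}_{1}$ by a nonzero scalar multiple rescales the single column of the second block, and the rank of the block-diagonal matrix is the sum of the two block ranks. Hence $\rank(\chi(A'))=\rank\chi$, so $\rank(\Phi\cdot\chi)=\rank\chi$. The only point beyond degree bookkeeping is this last linear-algebra observation extending the stated remark, which I expect to be routine; the main care is in tracking the graded degrees against $\chi(\frak{g}^{h})=0$.
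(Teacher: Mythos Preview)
Your proof is correct and follows essentially the same strategy as the paper: split $\Phi$ into its $\mathrm{Aut}^{*}(\frak{g})$ and $\mathrm{Aut}_{1}(\frak{g})$ components and treat each separately, using the filtration inclusions for the unipotent part and a change-of-basis argument for the grading-preserving part. The only cosmetic difference is that for $\Phi\in\mathrm{Aut}^{*}(\frak{g})$ the paper writes out the explicit transition matrices $R,S,T$ and scalar $c$ to get $(\Phi\cdot\chi)(A)=(R^{\mathrm{t}}\oplus T^{\mathrm{t}})\chi(A)(S\oplus c)$, whereas you phrase the same computation as an appeal to basis-independence of the rank; these are the same argument.
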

\begin{proof}
Let  $\mathrm{ht}(\chi)=h.$
Write
$$\mbox{$A_{1}=(f_{1}, \ldots, f_{k})^{\mathrm{t}}(\mathrm{M}_{x_{1}}, \ldots, \mathrm{M}_{x_{2n}})$
and
$A_{2}=(g_{1}, \ldots, g_{l})^{\mathrm{t}}(\mathrm{M}_{1}),$}$$
where  $\{f_{1}, \ldots, f_{k}\}$ is a basis of $\frak{g}_{[h]}$
and $\{g_{1}, \ldots, g_{l}\}$
is  a basis of  $\frak{g}_{[h+1]}$.
If $\Phi$ is in $\mathrm{Aut}^{*}(\frak{g}),$ then
there exist invertible matrices $R, S, T$ and $0\neq c\in \mathbb{F}$
such that
\begin{eqnarray*}
&&\Phi^{-1}(f_{1}, \ldots, f_{k})=(f_{1}, \ldots, f_{k})R,\\
&&\Phi^{-1}(\mathrm{M}_{x_{1}}, \ldots, \mathrm{M}_{x_{2n}})=(\mathrm{M}_{x_{1}}, \ldots, \mathrm{M}_{x_{2n}})S,\\
&&\Phi^{-1}(g_{1}, \ldots, g_{l})=(g_{1}, \ldots, g_{l})T,\\
&&\Phi^{-1}(\mathrm{M}_{1})=(\mathrm{M}_{1})c.
\end{eqnarray*}
Then
\begin{eqnarray*}(\Phi\cdot\chi)(A_{1})
&=&\chi(\Phi^{-1}A_{1})\\
&=&\chi(\Phi^{-1}(f_{1}),  \ldots, \Phi^{-1}(f_{k}))^{\mathrm{t}}
(\Phi^{-1}(\mathrm{M}_{x_{1}}), \ldots, \Phi^{-1}(\mathrm{M}_{x_{2n}}))\\
&=&\chi(R^{\mathrm{t}}(f_{1}, \ldots, f_{k})^{\mathrm{t}}(\mathrm{M}_{x_{1}}, \ldots, \mathrm{M}_{x_{2n}})S)\\
&=&R^{\mathrm{t}}\chi(A_{1})S,
\end{eqnarray*}
\begin{eqnarray*}(\Phi\cdot\chi)(A_{2})
&=&\chi(\Phi^{-1}A_{2})\\
&=&\chi(\Phi^{-1}(g_{1}),  \ldots, \Phi^{-1}(g_{l}))^{\mathrm{t}}
(\Phi^{-1}(\mathrm{M}_{1})))\\
&=&\chi(T^{\mathrm{t}}(g_{1}, \ldots, g_{l})^{\mathrm{t}}(\mathrm{M}_{1})c)\\
&=&T^{\mathrm{t}}\chi(A_{2})c.
\end{eqnarray*}
Letting $A=A_{1}\oplus A_{2}$, we have
$$(\Phi\cdot\chi)(A)
=R^{\mathrm{t}}\chi(A_{1})S\oplus T^{\mathrm{t}}\chi(A_{2})c
=(R^{\mathrm{t}}\oplus T^{\mathrm{t}})\chi(A)(S\oplus c).$$
It is clear that $\rank (\Phi\cdot \chi)(A)=\rank \chi(A),$
since  $R, S, T$ are invertible and $ c\neq 0.$

If $\Phi\in \mathrm{Aut}_{1}(\frak{g}),$
then we have
\begin{eqnarray*}
(\Phi\cdot\chi)([f_{i}, \mathrm{M}_{x_{j}}])
&=&\chi([\Phi^{-1}f_{i}, \Phi^{-1}\mathrm{M}_{x_{j}}])\\
&=&\chi([f_{i}+u_{i}, \mathrm{M}_{x_{j}}+v_{i}])\\
&=&\chi([f_{i}, \mathrm{M}_{x_{j}}]),
\end{eqnarray*}
\begin{eqnarray*}
(\Phi\cdot\chi)([g_{i}, \mathrm{M}_{1}])
&=&\chi([\Phi^{-1}g_{i}, \Phi^{-1}\mathrm{M}_{1}])\\
&=&\chi([g_{i}+w_{i}, \mathrm{M}_{1}+a])\\
&=&\chi([g_{i}, \mathrm{M}_{1}]),
\end{eqnarray*}
where $u_{i}\in\frak{g}^{h+1}, v_{i}\in\frak{g}^{0},
w_{i}\in\frak{g}^{h+2}\mbox{\;and\;} a\in\frak{g}^{-1}.$
Hence, $(\Phi\cdot\chi)(A)=\chi(\Phi^{-1}A)=\chi(A).$
Consequently, $\rank (\Phi\cdot \chi)(A)=\rank \chi(A)$.
\end{proof}
The statements at the start of this section and the above proposition tell us that we should find a convenient representative for
the orbit of $\chi$ under the coadjoint action of the automorphism group of $\frak{g}.$
For our purposes, $\chi$ may often be replaced by this
$p$-character, with the advantage being that certain arguments are thus
simplified.

Let $\chi$ be singular and of rank $r+1(< 2n+1)$
and $\mathrm{ht}(\chi)=h.$
Recall the characteristic matrix of $\frak{g}$
associated with $\chi$ is defined to be
$$\chi(A)=\chi((f_{1}, \ldots, f_{k})^{\mathrm{t}}(\mathrm{M}_{x_{1}}, \ldots, \mathrm{M}_{x_{2n}})
\oplus(g_{1}, \ldots, g_{l})^{\mathrm{t}}(\mathrm{M}_{1})),$$
 where $\{f_{1}, \ldots, f_{k}\}$ is a basis of $\frak{g}_{[h]}$
and $\{g_{1}, \ldots, g_{l}\}$
is  a basis of  $\frak{g}_{[h+1]}$.
Then
 $\rank(\chi(A))=r+1.$
Below we always assume that
$\chi((g_{1}, \ldots, g_{l})^{\mathrm{t}}(\mathrm{M}_{1}))$ is nonzero.
Hence $\rank(\chi((f_{1}, \ldots, f_{k})^{\mathrm{t}}(\mathrm{M}_{x_{1}}, \ldots, \mathrm{M}_{x_{2n}})))=r.$
That is, there exists a subset
$\{i_{1}, \ldots, i_{r}\}$ of $\{1, \ldots, 2n\}$
such that the matrix $\chi((f_{1}, \ldots, f_{k})^{\mathrm{t}}(\mathrm{M}_{x_{i_{1}}}, \ldots, \mathrm{M}_{x_{i_{r}}}))$
has an invertible $r\times r$ minor.
\begin{definition}\label{1523}
Let $\chi\in\frak{g}_{\overline{0}}^{*}$ with $\mathrm{ht}(\chi)=h\geq 5$ and $\mathrm{rank}(\chi)=r+1<2n+1.$
We say $\chi$ to be $\Delta$-invertible, if there exists an element $\Phi$ of $\mathrm{Aut}^{*}(\frak{g})$
such that
\begin{enumerate}
\item[(1)] there exists a partition of the set $\{1, \ldots, 2n\}:$
$$\mbox{$I\cup J=\{1, \ldots, 2n\}$ and $I\cap J=\emptyset$},$$
where $I:=\{i_{1}, \dots, i_{r}\}\neq\{1, \ldots, 2n\}$ and $J:=\{i_{r+1}, \dots, i_{2n}\};$
\item[(2)] there exists a basis $\{f_{1}, \ldots, f_{k}\}$
of  $\frak{g}_{[h]}$ such that the matrix
$$(\Phi\cdot \chi)((f_{1}, \ldots, f_{k})^{\mathrm{t}}(\mathrm{M}_{x_{i_{1}}}, \ldots, \mathrm{M}_{x_{i_{r}}}))$$
has an invertible $r\times r$ minor;
\item[(3)] $(\Phi\cdot \chi)([\frak{g}_{[h]}, \mathrm{M}_{x_{j}}])=0$ for every  $j\in J$;
\item[(4)] there is a $\frak{g}_{[0]}$-submodule $\Delta\subseteq \frak{g}_{[h-1]}$ with $(\Phi\cdot \chi)(\Delta)=0$;
\item[(5)] there are homogeneous elements $e_{r+1}, \ldots, e_{2n}\in \Delta$ such that
the matrix
$$(\Phi\cdot \chi)((e_{r+1}, \ldots, e_{2n})^{\mathrm{t}}(\mathrm{M}_{x_{i_{r+1}}}, \ldots, \mathrm{M}_{x_{i_{2n}}}))$$
is invertible.
\end{enumerate}
\end{definition}

\begin{corollary}\label{1420}
Let $\chi\in\frak{g}_{\overline{0}}^{*}$ and $M$ be  a simple $u_{\chi}(\frak{g}^{0})$-module.
If  $\chi$ is $\Delta$-invertible,
then $1\otimes M$ is the unique simple $u_{\chi}(\frak{g}^{0})$-submodule of   $K_{\chi}(M)$.
\end{corollary}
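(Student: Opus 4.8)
The goal is to show that $1\otimes M$ is the unique simple $u_{\chi}(\frak{g}^{0})$-submodule of $K_{\chi}(M)$ when $\chi$ is $\Delta$-invertible. The natural strategy is to mimic the proof of Corollary~\ref{1609}, but replace the single application of Proposition~\ref{si=0} by a two-stage argument: first use the ``invertible part'' of the characteristic matrix (conditions (2), (3) and the nonvanishing $\chi([g,\mathrm{M}_1])\neq 0$) exactly as in Proposition~\ref{si=0}(2) to kill the coordinates indexed by $I\cup\{2n+1\}$; then use the submodule $\Delta\subseteq\frak{g}_{[h-1]}$ supplied by conditions (4)--(5) to kill the remaining coordinates indexed by $J$. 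Since $I\cup J=\{1,\ldots,2n\}$, together these force $m=0$, i.e.\ the top degree vanishes, whence the submodule lies in $1\otimes M$.

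First I would invoke Proposition~\ref{rank=} to replace $\chi$ by $\Phi\cdot\chi$ (with $\Phi\in\mathrm{Aut}^{*}(\frak{g})$ as in Definition~\ref{1523}); this is harmless because $M^{\Phi}$ is simple iff $M$ is, and $\mathrm{Aut}^{*}(\frak{g})$ preserves both the grading and $\mathrm{ht}(\chi)$, so the statement for $\Phi\cdot\chi$ gives the statement for $\chi$. Working now with the representative $\chi$, let $V$ be a simple $u_{\chi}(\frak{g}^{0})$-submodule of $K_{\chi}(M)$ and $V'=\mathbb{F}v$ a simple (hence $1$-dimensional, by Proposition~\ref{si=0}(1)) $u_{\chi}(\frak{g}^{h-1})$-submodule of $V$, with $v=\sum_{|s|\leq m}X^{s}\otimes v(s)$. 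Conditions (1)--(3) of $\Delta$-invertibility say precisely that there are $f_{1},\ldots,f_{r}\in\frak{g}_{[h]}$ with $\chi((f_1,\ldots,f_r)^{\mathrm t}(\mathrm{M}_{x_{i_1}},\ldots,\mathrm{M}_{x_{i_{2n}}}))=(A_r\mid 0)$ invertible on the $r\times r$ block indexed by $I$, and the standing assumption gives $g\in\frak{g}_{[h+1]}$ with $\chi([g,\mathrm{M}_1])\neq0$, i.e.\ $\chi([\frak{g}_{[h+1]},\frak{g}_{[-2]}])\neq0$. So Proposition~\ref{si=0}(2) applies verbatim and yields $s_{i_1}=\cdots=s_{i_r}=s_{2n+1}=0$ for all $s$ with $v(s)\neq0$ and $|s|=m$.

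Next comes the new step using $\Delta$. Because $\mathrm{ht}(\chi)=h$, we have $\chi(\frak{g}^{h})=0$, and for $e\in\Delta\subseteq\frak{g}_{[h-1]}\subseteq\frak{g}^{h-1}$ the bracket $[e,\mathrm{M}_{x_j}]$ lies in $\frak{g}_{[h-2]}\subseteq\frak{g}^{h}$, so $[e,\mathrm{M}_{x_j}]M=0$ (cf.\ \cite[Lemma 3.5]{Y2}), and $[e,\mathrm{M}_1]\in\frak{g}_{[h-3]}$ (with $h\geq5$ this still sits in $\frak{g}^{2}$, so it acts on $M$; but $s_{2n+1}=0$ in top degree kills that term anyway). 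Each $e\in\Delta$ acts nilpotently on $V'$ since $\Delta$ is $p$-unipotent (it lies in the $p$-nilpotent radical of $\frak{g}^0$ as $\chi(\Delta)=0$ and it is an ideal-graded piece of degree $h-1\geq 4>0$), so applying $e_{r+1},\ldots,e_{2n}$ from condition (5) to $v$ and repeating the elimination argument of Proposition~\ref{si=0}(2) — extract the top-degree part, use Lemma~\ref{maximal vector} to replace $[e_t,\mathrm{M}_{x_j}]v(s)$ by $\pm\chi([e_t,\mathrm{M}_{x_j}])v(s)$, then invert the matrix $(\Phi\cdot\chi)((e_{r+1},\ldots,e_{2n})^{\mathrm t}(\mathrm{M}_{x_{i_{r+1}}},\ldots,\mathrm{M}_{x_{i_{2n}}}))$ — forces $s_{i_{r+1}}=\cdots=s_{i_{2n}}=0$ in top degree as well. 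Combined with the previous step, all $s_k=0$ for $k=1,\ldots,2n+1$ whenever $|s|=m$ and $v(s)\neq0$, which is only possible if $m=0$. Hence $v\in1\otimes M$, and since $1\otimes M$ and $V$ are both simple $\frak{g}^0$-modules, $V=1\otimes M$.

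The main obstacle is the second elimination step: one must be careful that $e\in\Delta$ really acts nilpotently on the $1$-dimensional module $V'$ (so that $ev=0$ can be used), which requires knowing $\Delta$ is $p$-unipotent — this is why condition (4) asks for a $\frak{g}_{[0]}$-submodule with $(\Phi\cdot\chi)(\Delta)=0$ rather than an arbitrary subspace, and why the hypothesis $h\geq5$ is imposed (so that $\frak{g}_{[h-1]}$ and its brackets down two degrees still lie in $\frak{g}^{h}$, keeping $\chi$ vanishing where needed). One also needs the lower-order ``$o(m-1)$'' terms not to interfere, which is handled exactly as in Proposition~\ref{si=0} by projecting onto top degree before applying Lemma~\ref{maximal vector}. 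With those points checked, the proof reduces to two applications of the same bookkeeping already carried out in Proposition~\ref{si=0}.
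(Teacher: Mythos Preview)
Your two-stage plan is exactly the paper's strategy, but the second elimination step as you have written it does not go through, and the reason is the choice of which subalgebra $V'$ is a module over. You take $V'=\mathbb{F}v$ to be a simple $u_{\chi}(\frak{g}^{h-1})$-submodule, as in Corollary~\ref{1609}. That is enough for the first step (Proposition~\ref{si=0}(2) uses only $\frak{g}^{h-1}$), but not for the second: when you apply $e_{t}\in\Delta\subseteq\frak{g}_{[h-1]}$ and look at the top-degree pieces, the coefficients that appear are $[e_{t},\mathrm{M}_{x_{j}}]\in\frak{g}_{[h-2]}$. To invoke Lemma~\ref{maximal vector} and replace $[e_{t},\mathrm{M}_{x_{j}}]v(s)$ by $\pm\chi([e_{t},\mathrm{M}_{x_{j}}])v(s)$ you must first know that $v$ itself is an eigenvector of $[e_{t},\mathrm{M}_{x_{j}}]$; but $[e_{t},\mathrm{M}_{x_{j}}]\notin\frak{g}^{h-1}$, so your $1$-dimensional $\frak{g}^{h-1}$-module $V'$ gives you no such control. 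Relatedly, your inclusion ``$\frak{g}_{[h-2]}\subseteq\frak{g}^{h}$'' is false (it is $\frak{g}_{[h-2]}\subseteq\frak{g}^{h-2}$), so the claim $[e,\mathrm{M}_{x_{j}}]M=0$ is unfounded: $\chi$ need not vanish on $\frak{g}_{[h-2]}$, and these elements do not lie in any $p$-unipotent ideal killed by $\chi$.

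The fix, which is precisely where the hypothesis $h\geq 5$ is used, is to take $V'$ to be a simple $u_{\chi}(\frak{g}^{h-2})$-submodule from the start. Since $[\frak{g}^{h-2},\frak{g}^{h-2}]\subseteq\frak{g}^{2h-4}\subseteq\frak{g}^{h}$ when $h\geq 5$, the same $1$-dimensionality argument as in Proposition~\ref{si=0}(1) applies, and now $v$ is an eigenvector of every element of $\frak{g}^{h-2}$, in particular of each $[e_{t},\mathrm{M}_{x_{j}}]$. One then shows $(\Delta+\frak{g}^{h})M=0$ via \cite[Lemma~3.5]{Y2} (this is where condition~(4) that $\Delta$ is a $\frak{g}_{[0]}$-submodule with $\chi(\Delta)=0$ is needed, so that $\Delta+\frak{g}^{h}$ is a $p$-unipotent restricted ideal on which $\chi$ vanishes), which forces the eigenvalue of each $e_{t}$ on $v$ to be $0$; after that, the eigenvalue of $[e_{t},\mathrm{M}_{x_{j}}]$ on $v$ is $\chi([e_{t},\mathrm{M}_{x_{j}}])$, Lemma~\ref{maximal vector} transports this to the $v(s)$, and your matrix inversion finishes as intended.
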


\begin{proof}
Suppose $\mathrm{rank}(\chi)=r+1$.
By Proposition \ref{rank=} and the arguments before Definition \ref{1523},
without loss of generality, we may assume  $I=\{1, 2, \ldots, r\}$ and $\Phi=\mathrm{id}$.
Let $V$ be a simple $u_{\chi}(\frak{g}^{h-2})$-submodule
of $K_{\chi}(M)$.
We have $[\frak{g}^{h-2}, \frak{g}^{h-2}]\subseteq\frak{g}^{2h-5}\subseteq\frak{g}^{h},$
where the latter inclusion is implied by $h\geq 5$.
Since $\mathrm{ht}(\chi)=h,$
it follows that  $\chi([\frak{g}^{h-2}, \frak{g}^{h-2}])=0.$
Consequently, every element in $[\frak{g}^{h-2}, \frak{g}^{h-2}]$
is nilpotent. Then by \cite[Lemma 3.3]{Y2},
$V$ is 1-dimensional, denoted by
 $\mathbb{F} v,$
where $ v=\sum_{|s|\leq m}X^{s}\otimes v(s).$
Moreover, $V$ is also a simple $u_{\chi}(\frak{g}^{h-1})$-module.
By Definition \ref{1523} and Proposition \ref{si=0},
  we have
 $s_{1}=s_{2}=\cdots=s_{r}=s_{2n+1}= 0,$
for all $s$ with $v(s)\neq 0$ and $|s|=m$.

By Definition \ref{1523}, there exists a
$\frak{g}_{[0]}$-submodule $\Delta\subseteq \frak{g}_{[h-1]}$ with $\chi(\Delta)=0$.
Since $\mathrm{ht}(\chi)=h\geq 5$ and  the filtration  is restricted,
we get that $\Delta+\frak{g}^{h}$
is a $p$-unipotent restricted ideal of $\frak{g}^{0}$ on  which
$\chi$ vanishes.
Then by \cite[Lemma 3.5]{Y2}, we  have
$(\Delta+\frak{g}^{h})M=0.$
Let $e_{r+1}, \ldots, e_{2n}\in \Delta$ such that
the matrix $\chi((e_{r+1}, \ldots, e_{2n})^{\mathrm{t}}(\mathrm{M}_{x_{r+1}}, \ldots, \mathrm{M}_{x_{2n}}))$
is invertible.
For $r+1\leq j\leq 2n$, applying $e_{j}$  to $ v$,
we obtain that
\begin{eqnarray*}\label{e_{j}}
e_{j} v
&=&  \sum_{|s|\leq m}\pm X^{s}\otimes e_{j}v(s)\\
&&+ \sum_{|s|\leq m}\sum^{2n}_{i=1}\pm s_{i}X^{s-\varepsilon_{i}}\otimes [e_{j}, \mathrm{M}_{x_{i}}]v(s)\\
&&+ \sum_{|s|\leq m}\pm s_{2n+1}X^{s-\varepsilon_{2n+1}}\otimes [e_{j}, \mathrm{M}_{1}]v(s)+o(m-1)\\
&&=\sum_{|s|= m}\sum^{2n}_{i=r+1}\pm s_{i}X^{s-\varepsilon_{i}}\otimes [e_{j}, \mathrm{M}_{x_{i}}]v(s)+o(m-1).
\end{eqnarray*}
For the last equality we should note that
  $\Delta$ acts trivially on $M$ and
 $s_{1}=s_{2}=\cdots=s_{r}=s_{2n+1}=0$
for all $s$ with $v(s)\neq 0$ and $|s|=m$.
Since $\mathbb{F} v$ is a $\frak{g}^{h-1}$-module and
the homogeneous element  $e_{j}$ is in $\Delta\subseteq\frak{g}_{[h-1]},$
we may assume that $ v$ is a eigenvector of $e_{j}$ with the eigenvalue $a$.
By Lemma \ref{maximal vector}, for all $s$ with $|s|=m,$
the nonzero $v(s)$'s are eigenvectors of $e_{j}$
with the eigenvalue $a$ or $-a.$
Since $\Delta$ acts trivially on $M$, we have $a=0.$
Hence
$$\sum_{|s|= m}\sum^{2n}_{i=r+1}\pm s_{i}X^{s-\varepsilon_{i}}\otimes [e_{j}, \mathrm{M}_{x_{i}}]v(s)+o(m-1)=0.$$
Since $[e_{j}, \mathrm{M}_{x_{i}}]\in \frak{g}_{[h-2]}\subseteq\frak{g}^{h-2},$
$ v$ is an eigenvector of $[e_{j}, \mathrm{M}_{x_{i}}].$
By $[e_{j}, \mathrm{M}_{x_{i}}]^{[p]^{k}}=0$ for some $k\in \mathbb{N}$,
we get that $[e_{j}, \mathrm{M}_{x_{i}}]v(s)=\chi([e_{j}, \mathrm{M}_{x_{i}}])v(s)$
for all $s$ with $|s|=m.$
It follows that
\begin{equation}\label{equality1}
\sum_{|s|= m}\sum^{2n}_{i=r+1}\pm s_{i}X^{s-\varepsilon_{i}}\otimes \chi([e_{j}, \mathrm{M}_{x_{i}}])v(s)=0.
\end{equation}
Consequently,  $ s_{i}\chi([e_{j}, \mathrm{M}_{x_{i}}])v(s)=0$
for $s$ with  $|s|=m,$
and  $i, j= r+1, \ldots, 2n.$
We claim that $s_{i}=0, i= r+1, \ldots, 2n$ for all $s$ with $|s|=m.$
Indeed, by (\ref{equality1}), for every $j= r+1, \ldots, 2n,$
we have
$$\sum_{|t|=m-1}\sum^{2n}_{i=r+1}\pm (t_{i}+1)X^{\mathrm{t}}\otimes\chi([e_{j}, \mathrm{M}_{x_{i}}])u(t+\varepsilon_{i})=0.$$
Note that  the matrix
$\chi((e_{r+1}, \ldots, e_{2n})^{\mathrm{t}}(\mathrm{M}_{x_{r+1}}, \ldots, \mathrm{M}_{x_{2n}}))$
is invertible.
For every $i= r+1, \ldots, 2n,$
we have
$(t_{i}+1)u(t+\varepsilon_{i})=0$.
Therefore, $s_{i}v(s)=0$ for all $s$
with $v(s)\neq 0$ and $|s|=m$.
Furthermore,
$s_{i}=0, i= r+1, \ldots, 2n$ for all $s$ with $|s|=m.$
Consequently, $m=0,$
that is, $ v\in 1\otimes M.$
Thus  $1\otimes M$ is the unique $K_{\chi}(M)$
  $u_{\chi}(\frak{g}^{0})$-submodule.
\end{proof}

As for Theorem \ref{theorem1}, using Corollary \ref{1420} one may  prove the following theorem, which characterizes all simple $\frak{g}$-modules of  $\Delta$-invertible $p$-characters.
\begin{theorem}\label{1421}
Suppose   $\chi\in\frak{g}_{\overline{0}}^{*}$ is $\Delta$-invertible.
Then the following statements hold.

(1)
$K_{\chi}(M)$ is simple for any simple $u_{\chi}(\frak{g}^{0})$-module $M$.

(2)
For any two simple $u_{\chi}(\frak{g}^{0})$-modules  $M$ and $N$,
$$K_{\chi}(M)\cong K_{\chi}(N)\Longleftrightarrow M\cong N.$$

(3) Any simple $u_{\chi}(\frak{g})$-module is isomorphic to
 $K_{\chi}(M)$ for some simple $u_{\chi}(\frak{g}^{0})$-module $M$.
\end{theorem}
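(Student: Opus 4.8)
The plan is to mimic the proof of Theorem~\ref{theorem1}, replacing the role of Corollary~\ref{1609} with Corollary~\ref{1420}. Since $\chi$ is $\Delta$-invertible, Corollary~\ref{1420} tells us that for every simple $u_{\chi}(\frak{g}^{0})$-module $M$, the subspace $1\otimes M$ is the \emph{unique} simple $u_{\chi}(\frak{g}^{0})$-submodule of the $\chi$-reduced Kac module $K_{\chi}(M)$. This single fact drives all three parts, exactly as in Section~1.

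For part (1), I would take an arbitrary nonzero $u_{\chi}(\frak{g})$-submodule $K$ of $K_{\chi}(M)$. Restricting the action to $u_{\chi}(\frak{g}^{0})$ and using finite-dimensionality, $K$ contains some simple $u_{\chi}(\frak{g}^{0})$-submodule; by Corollary~\ref{1420} this submodule must be $1\otimes M$. Hence $1\otimes M\subseteq K$, so $K\supseteq u_{\chi}(\frak{g})(1\otimes M)=K_{\chi}(M)$, forcing $K=K_{\chi}(M)$. Thus $K_{\chi}(M)$ is simple. For part (2), the implication $M\cong N\Rightarrow K_{\chi}(M)\cong K_{\chi}(N)$ is immediate from functoriality of induction; conversely, given an isomorphism $K_{\chi}(M)\cong K_{\chi}(N)$, it carries the unique simple $u_{\chi}(\frak{g}^{0})$-submodule of the source onto that of the target, i.e.\ $1\otimes M\cong 1\otimes N$ as $u_{\chi}(\frak{g}^{0})$-modules, whence $M\cong N$. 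For part (3), given any simple $u_{\chi}(\frak{g})$-module $A$, pick a simple $u_{\chi}(\frak{g}^{0})$-submodule $M\subseteq A$; then by Frobenius reciprocity there is a nonzero $\frak{g}$-homomorphism $\varphi\colon K_{\chi}(M)\to A$, which is surjective because $A$ is simple and is injective because $K_{\chi}(M)$ is simple by part (1); hence $\varphi$ is an isomorphism.

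I do not anticipate a genuine obstacle here: every step is a routine transcription of the nonsingular case, and the only nontrivial input, the uniqueness of $1\otimes M$ as a simple $u_{\chi}(\frak{g}^{0})$-submodule, has already been established in Corollary~\ref{1420} under the $\Delta$-invertibility hypothesis. The one point requiring a little care is making sure the ``any simple $u_{\chi}(\frak{g}^{0})$-module $M$'' in the statement of the theorem is precisely the hypothesis of Corollary~\ref{1420}; since that corollary is stated for an arbitrary simple $u_{\chi}(\frak{g}^{0})$-module with $\chi$ $\Delta$-invertible, there is nothing further to check. The proof can therefore be kept to a few lines, referring back to the argument for Theorem~\ref{theorem1} \emph{mutatis mutandis}.
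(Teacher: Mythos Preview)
Your proposal is correct and matches the paper's approach exactly: the paper does not spell out a separate proof but simply remarks that one argues ``as for Theorem~\ref{theorem1}, using Corollary~\ref{1420},'' which is precisely what you have done. No changes are needed.
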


\section{Simple modules of regular semisimple $p$-characters of height one}

Let $\chi\in\frak{g}_{\overline{0}}^{*}$ with $\mathrm{ht}(\chi)\leq 1.$
Note that  $\frak{g}^{1}$ is a  $p$-unipotent restricted ideal of $\frak{g}^{0}.$
Then by \cite[Lemma 3.5]{Y2},
$\frak{g}^{1}$ acts trivially on any simple $\frak{g}^{0}$-module.
Consequently, any simple $u_{\chi}(\frak{g}^{0})$-module is a simple
$u_{\chi}(\frak{g}_{[0]})$-module. And
any simple $u_{\chi}(\frak{g}_{[0]})$-module can be
extended to be a simple $u_{\chi}(\frak{g}^{0})$-module
with the trivial action by $\frak{g}^{1}$.
Then we can obtain   the $\chi$-reduced Kac module $K_{\chi}(M)$
for a simple $u_{\chi}(\frak{g}_{[0]})$-module $M$.

For convenience, write $\mathrm{h}_{j}=\mathrm{M}_{x^{(\varepsilon_{j}+\varepsilon_{j'})}-x^{(\varepsilon_{j+1}+\varepsilon_{(j+1)'})}}$
for $j=1, \ldots, n-1.$
\begin{definition}
Let $\chi\in\frak{g}_{\overline{0}}^{*}$ with $\mathrm{ht}(\chi)= 1.$
$\chi$ is said to be regular semisimple,
if there exists an element $\Phi$ in $\mathrm{Aut}(\frak{g})$
such that

(1)
$(\Phi\cdot\chi)(\frak{n}^{\pm}_{[0]})=0, i=1, \ldots, n;$

(2)
$(\Phi\cdot\chi)(\mathrm{h}_{j})\neq 0, j=1, \ldots, n-1;$

(3)
$(\Phi\cdot\chi)(\sum^{n-1}_{j=1}\mathrm{h}_{j})\neq 0.$
\end{definition}

We are in the position to state the main result of this section, which gives all simple $\frak{g}$-modules of regular semisimple
$p$-characters.
\begin{theorem}
Let $\chi$ be a regular semisimple $p$-character.
Then the set
$$\{K_{\chi}(M)\mid \mbox{$M$ is a simple $u_{\chi}(\frak{g}_{[0]})$-module}\}$$
exhausts all simple $u_{\chi}(\frak{g})$-modules.
\end{theorem}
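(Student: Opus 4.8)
The plan is to reduce the whole statement to the single assertion that $K_{\chi}(M)$ is simple for every simple $u_{\chi}(\frak{g}_{[0]})$-module $M$. Granting that, exhaustion is immediate: if $A$ is any simple $u_{\chi}(\frak{g})$-module, choose a simple $u_{\chi}(\frak{g}^{0})$-submodule of $A$; since $\mathrm{ht}(\chi)=1$ the $p$-unipotent ideal $\frak{g}^{1}$ acts trivially on it (as recalled at the start of this section), so it is a simple $u_{\chi}(\frak{g}_{[0]})$-module $M$, and the universal property of $K_{\chi}(M)=u_{\chi}(\frak{g})\otimes_{u_{\chi}(\frak{g}^{0})}M$ gives a nonzero $\frak{g}$-homomorphism $K_{\chi}(M)\to A$, which is an isomorphism as soon as $K_{\chi}(M)$ is simple. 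So everything comes down to simplicity of $K_{\chi}(M)$.

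First I would normalize $\chi$. Using that both $\rank(\chi)$ and $\mathrm{ht}(\chi)$ are invariant under $\mathrm{Aut}(\frak{g})$ (Proposition \ref{rank=} and the remarks before Definition \ref{1523}) we may replace $\chi$ by $\Phi\cdot\chi$ and assume $\chi(\mathfrak{n}^{\pm}_{[0]})=0$, $\chi|_{\frak{g}^{1}}=0$ (the latter from $\mathrm{ht}(\chi)=1$), $\chi(\mathrm{h}_{j})\neq 0$ for $j=1,\dots,n-1$, and $\chi(\sum_{j}\mathrm{h}_{j})\neq 0$. Under these hypotheses the simple $u_{\chi}(\frak{g}_{[0]})$-modules are exactly the baby Verma modules $M\cong u_{\chi}(\frak{g}_{[0]})\otimes_{u_{\chi}(\mathfrak{b}_{[0]})}\mathbb{F}_{\lambda}$, where $\lambda$ is a linear character of $\mathfrak{h}$ (resp.\ of $\mathfrak{t}$): this is the standard classification of simple modules for a regular semisimple $p$-character relative to the triangular decomposition $\frak{g}_{[0]}=\mathfrak{n}^{-}_{[0]}\oplus\mathfrak{h}\oplus\mathfrak{n}^{+}_{[0]}$. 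Now put $\mathfrak{n}^{+}:=\mathfrak{n}^{+}_{[0]}\oplus\frak{g}^{1}$, $\mathfrak{b}:=\mathfrak{h}\oplus\mathfrak{n}^{+}$ and $\mathfrak{n}^{-}:=\mathfrak{n}^{-}_{[0]}\oplus\frak{g}_{[-1]}\oplus\frak{g}_{[-2]}$; using $[\mathrm{M}_{f},\mathrm{M}_{g}]=\mathrm{M}_{\{f,g\}_{m.b.}}$ one checks these are subsuperalgebras with $\frak{g}=\mathfrak{n}^{-}\oplus\mathfrak{h}\oplus\mathfrak{n}^{+}$. By transitivity of induction together with $\frak{g}^{1}M=0$ we get $K_{\chi}(M)\cong Z(\lambda):=u_{\chi}(\frak{g})\otimes_{u_{\chi}(\mathfrak{b})}\mathbb{F}_{\lambda}$, and by the PBW theorem $Z(\lambda)\cong u_{\chi}(\mathfrak{n}^{-})\otimes_{\mathbb{F}}\mathbb{F}_{\lambda}$ as superspaces. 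Since $\mathfrak{h}$ is a torus it acts semisimply on $Z(\lambda)$, the line $\mathbb{F}(1\otimes 1)$ is its $\lambda$-weight space (of multiplicity one), and every other weight of $Z(\lambda)$ has the form $\lambda-\nu$ with $\nu$ a nonzero sum of weights of $\mathfrak{n}^{+}$.

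Then I would prove $Z(\lambda)$ simple by a highest-weight-vector argument. Let $0\neq N\subseteq Z(\lambda)$ be a submodule; being $\mathfrak{h}$-stable it is a sum of weight spaces, so pick a weight $\mu$ of $N$ that is maximal for the order $\mu'\preceq\mu\Leftrightarrow\mu-\mu'$ lies in the $\mathbb{Z}_{\geq 0}$-cone generated by the weights of $\mathfrak{n}^{+}$, and take $0\neq v\in N_{\mu}$. For any positive root $\beta$ and $e\in\frak{g}_{\beta}$, the vector $ev\in N$ is homogeneous of weight $\mu+\beta\succ\mu$, so maximality of $\mu$ forces $\mathfrak{n}^{+}v=0$; hence $u_{\chi}(\frak{g})v=u_{\chi}(\mathfrak{n}^{-})u_{\chi}(\mathfrak{h})u_{\chi}(\mathfrak{n}^{+})v=u_{\chi}(\mathfrak{n}^{-})v\subseteq N$ by PBW. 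The key remaining claim is that $\mathfrak{n}^{+}v=0$ already forces $\mu=\lambda$, i.e.\ $v\in\mathbb{F}(1\otimes 1)$; granting it, $N\supseteq\mathbb{F}(1\otimes 1)$, which generates $Z(\lambda)$, so $N=Z(\lambda)$ and $Z(\lambda)$ is simple. To get the claim I would expand $v$ in a PBW monomial basis of $u_{\chi}(\mathfrak{n}^{-})\otimes 1$ for an ordering that puts the root vectors for the simple roots $\alpha_{j}$ (those with coroot $\mathrm{h}_{j}$) and the generator $\mathrm{M}_{1}$ of $\frak{g}_{[-2]}$ last; assuming $v\notin\mathbb{F}(1\otimes 1)$ one applies a suitable raising operator $e_{\alpha_{j}}\in\frak{g}_{\alpha_{j}}$ (or the raising operator for the $\frak{g}_{[-2]}$-direction) and commutes it past the lowest such factor, producing a coefficient that is an ``$\mathfrak{sl}_{2}$-'' or Heisenberg-type scalar built from $\lambda$ and $\chi$ and is nonzero precisely because $\chi(\mathrm{h}_{j})\neq 0$ (and, for the $\mathrm{M}_{1}$-direction, because $\chi(\sum_{j}\mathrm{h}_{j})\neq 0$), contradicting $\mathfrak{n}^{+}v=0$.

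The main obstacle is exactly this last claim. Unlike the reductive case, $\mathfrak{n}^{-}$ is non-abelian, the Cartan $\mathfrak{h}$ does not separate the two kinds of degree-$(-1)$ generators $\mathrm{M}_{x_{j}}$ and $\mathrm{M}_{x_{j'}}$ in the naive way (so several root spaces of $\mathfrak{n}^{-}$ are mixed even/odd and of dimension $>1$), and $[\frak{g}_{[-1]},\frak{g}_{[-1]}]\subseteq\frak{g}_{[-2]}=\mathbb{F}\mathrm{M}_{1}$ rather than into $\mathfrak{h}$; the induction therefore has to track a carefully chosen PBW order and handle the odd directions and the $\mathrm{M}_{1}$-direction by hand. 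The contact bracket keeps all the relevant structure constants explicit, and conditions (2)--(3) in the definition of regular semisimple are precisely what make these constants invertible after pairing with $\chi$. A secondary point to settle is the baby-Verma classification of the simple $u_{\chi}(\frak{g}_{[0]})$-modules, which can either be imported from the representation theory of $\frak{g}_{[0]}$ or reproved at degree zero by the same triangular-decomposition argument.
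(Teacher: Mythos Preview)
Your reduction of the theorem to the simplicity of $K_{\chi}(M)$ is correct and matches the paper's step (2). The problem is with your argument for simplicity.

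The central gap is your use of a ``maximal weight'' in characteristic $p$. You define $\mu'\preceq\mu$ to mean $\mu-\mu'$ lies in the $\mathbb{Z}_{\geq 0}$-cone generated by the positive roots, and then pick a maximal weight $\mu$ of $N$. But over a field of characteristic $p$ the weights of $Z(\lambda)$ live in $\mathfrak{h}^{*}$, and for any positive root $\alpha$ one has $p\alpha=0$; consequently $\mu\preceq\mu+\alpha\preceq\cdots\preceq\mu+p\alpha=\mu$, so the relation $\preceq$ is not antisymmetric and there is no maximal weight. For the same reason your multiplicity-one claim for the $\lambda$-weight space is unjustified: a PBW monomial in $u_{\chi}(\mathfrak{n}^{-})$ whose root-sum vanishes modulo $p$ contributes again to weight $\lambda$, and since $\dim\mathfrak{n}^{-}$ far exceeds $\dim\mathfrak{h}$ such cancellations certainly occur. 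One can still produce an $\mathfrak{n}^{+}$-invariant weight vector in $N$ by Engel-type arguments (since $\chi|_{\mathfrak{n}^{+}}=0$ and $\mathfrak{n}^{+}$ is $p$-unipotent), but you would then be left with exactly the assertion you yourself flag as the ``main obstacle'': that any such vector lies in $\mathbb{F}(1\otimes 1)$. Your sketch for that step (commute a raising operator past one lowering factor and invoke an $\mathfrak{sl}_{2}$-type scalar) does not handle the nonabelian, mixed-parity structure of $\mathfrak{n}^{-}$ and the many PBW monomials of the same $\mathfrak{h}$-weight; as written it is not a proof.

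The paper avoids all of this by a concrete, bottom-up computation rather than an abstract highest-weight argument. Given a nonzero submodule $K$, it first applies the negative-degree generators $\mathrm{M}_{1},\mathrm{M}_{x_{i}}$ to any $v\in K$ to force the longest monomial $y=\mathrm{M}_{x_{1'}}^{\,p-1}\cdots\mathrm{M}_{x_{n'}}^{\,p-1}\mathrm{M}_{x_{n}}\cdots\mathrm{M}_{x_{1}}\mathrm{M}_{1}$ tensor some nonzero element of $M$ into $K$, then uses $[\mathrm{M}_{x^{(\varepsilon_{i}+\varepsilon_{j})}},y]=0$ to obtain $y\otimes M\subseteq K$ and hence $y\otimes v_{\lambda}\in K$ for a highest weight vector $v_{\lambda}\in M$. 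The heart of the proof is then an explicit climb from $y\otimes v_{\lambda}$ back to $1\otimes v_{\lambda}$ by successively applying carefully chosen degree-$1$ and degree-$2$ elements such as $\mathrm{M}_{x^{(\varepsilon_{i}+\varepsilon_{i+1}+\varepsilon_{(i+1)'})}-x^{(\varepsilon_{i}+\varepsilon_{i+2}+\varepsilon_{(i+2)'})}}$ and (at the last step) a degree-$2$ element involving $x_{2n+1}$; at each step the coefficient produced is $\lambda(\mathrm{h}_{j})$ or $\lambda(\sum_{j}\mathrm{h}_{j})+1$, which is nonzero because $\lambda(\mathrm{h}_{j})^{p}-\lambda(\mathrm{h}_{j})=\chi(\mathrm{h}_{j})^{p}\neq 0$ (this is exactly where conditions (2) and (3) of regular semisimplicity enter). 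This explicit route is what replaces your nonexistent partial order.
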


\begin{proof}
(1) Firstly we shall prove that $K_{\chi}(M)$ is simple
 for any simple $U_{\chi}(\frak{g}_{[0]})$-module $M$.
Without   loss of generality,
we may assume that
$\chi$ satisfies
$$\mbox{$\chi\left(\sum^{n-1}_{k=1}\mathrm{h}_{k}\right)\neq 0$ and $\chi(\mathrm{h}_{j})\neq 0$
for $j=1, \ldots, n-1.$}$$
Let $K$ be a nonzero $u_{\chi}(\frak{g})$-submodule
of $K_{\chi}(M)$.
Take a nonzero element $v\in K$ and suppose $v=\sum_{|s|\leq m}X^{s}\otimes v(s).$
Write
 $$y=\mathrm{M}^{p-1}_{x_{1'}}\mathrm{M}^{p-1}_{x_{2'}}\cdots
\mathrm{M}^{p-1}_{x_{n'}}\mathrm{M}_{x_{n}}
\mathrm{M}_{x_{n-1}}\cdots\mathrm{M}_{x_{1}}\mathrm{M}_{1}.$$
Then applying   $\mathrm{M}_{1}$ and $\mathrm{M}_{x_{i}}$
 to $ v$ for some $i=1, \ldots, 2n$,
we get   $y\otimes v(s)\in K$ for some nonzero  $v(s)\in M$.
For any $i\neq j=1, \ldots, 2n,$ we get
$[\mathrm{M}_{x^{(\varepsilon_{i}+\varepsilon_{j})}}, y]=0.$
Consequently, we have
$y\otimes  \mathrm{M}_{x^{(\varepsilon_{i}+\varepsilon_{j})}}  v(s)\in K.$
Note that $M$ is a simple  $u_{\chi}(\frak{g}_{[0]})$-module.
Hence, it is obvious that $y\otimes M$ is contained in $K.$
Assume that $ v_{\lambda}\in M$ is a highest weight
vector of the weight $\lambda$ with respect to the
standard Borel subalgebra of $\frak{g}_{[0]}.$
So  we get  $y\otimes  v_{\lambda}\in  K.$
Write
$$y^{\hat{1}}_{1'}=\mathrm{M}^{p-2}_{x_{1'}}\mathrm{M}^{p-1}_{x_{2'}}\cdots
\mathrm{M}^{p-1}_{x_{n'}}\mathrm{M}_{x_{n}}
\mathrm{M}_{x_{n-1}}\cdots\mathrm{M}_{x_{1}}\mathrm{M}_{1}.$$
Then applying
$\mathrm{M}_{x^{(\varepsilon_{1}+\varepsilon_{2}+\varepsilon_{2'})}-x^{(\varepsilon_{1}+\varepsilon_{3}+\varepsilon_{3'})}}$
to $y\otimes  v_{\lambda},$ we get
$(p-1)\lambda(\mathrm{h}_{2})y^{\hat{1}}_{1'}\otimes  v_{\lambda}\in  K.$
Since $M$ is a $u_{\chi}(\frak{g}_{[0]})$-module, for $i=1, \ldots, n-1,$ we have
$(\mathrm{h}^{p}_{i}-\mathrm{h}^{[p]}_{i}-\chi(\mathrm{h}_{i})^{p}) v_{\lambda}=0.$
Then by $\mathrm{h}^{[p]}_{i}=\mathrm{h}_{i}$, we get
$\lambda(\mathrm{h}_{i})^{p}-\lambda(\mathrm{h}_{i})=\chi(\mathrm{h}_{i})^{p}.$
Since
$\chi(\mathrm{h}_{i})\neq 0,$ we obtain
\begin{equation}\label{hi}
\lambda(\mathrm{h}_{i})\neq 0,\; i=1, \ldots, n-1.
\end{equation}
This implies that
$y^{\hat{1}}_{1'}\otimes  v_{\lambda}\in  K.$
Write
$$y^{\widehat{p-1}}_{1'}:=\mathrm{M}^{p-1}_{x_{2'}}\cdots
\mathrm{M}^{p-1}_{x_{n'}}\mathrm{M}_{x_{n}}
\mathrm{M}_{x_{n-1}}\cdots\mathrm{M}_{x_{1}}\mathrm{M}_{1}.$$
Then, applying  $\mathrm{M}^{p-2}_{x^{(\varepsilon_{1}+\varepsilon_{2}+\varepsilon_{2'})}-x^{(\varepsilon_{1}+\varepsilon_{3}+\varepsilon_{3'})}}$
 to $y^{\hat{1}}_{1'}\otimes  v_{\lambda}$, we obtain  from (\ref{hi}) that
    $y^{\widehat{p-1}}_{1'}\otimes  v_{\lambda}\in K$.
Write $$y^{\widehat{p-1}}_{(n-2)'}:=\mathrm{M}^{p-1}_{x_{(n-1)'}}\mathrm{M}^{p-1}_{x_{n'}}\mathrm{M}_{x_{n}}
\mathrm{M}_{x_{n-1}}\cdots\mathrm{M}_{x_{1}}\mathrm{M}_{1}.$$
Similarly, by applying
$(\mathrm{M}_{x^{(\varepsilon_{i}+\varepsilon_{i+1}+\varepsilon_{(i+1)'})}-x^{(\varepsilon_{i}+\varepsilon_{i+2}+\varepsilon_{(i+2)'})}})^{p-1}$
 in the order of $i=2, 3, \ldots, n-2$,  it follows from (\ref{hi}) that
$y^{\widehat{p-1}}_{(n-2)'}\otimes  v_{\lambda}\in K$.
Write
$$y^{\hat{1}}_{(n-1)'}:=\mathrm{M}^{p-2}_{x_{(n-1)'}}\mathrm{M}^{p-1}_{x_{n'}}\mathrm{M}_{x_{n}}
\mathrm{M}_{x_{n-1}}\cdots\mathrm{M}_{x_{1}}\mathrm{M}_{1}.$$
Applying  $\mathrm{M}_{x^{(\varepsilon_{n-1}+\varepsilon_{n}+\varepsilon_{n'})}-x^{(\varepsilon_{n-1}+\varepsilon_{1}+\varepsilon_{1'})}}$
to
$y^{\widehat{p-1}}_{(n-2)'}\otimes  v_{\lambda},$
we have
$$(p-1)\left(\lambda\left(\sum^{n-1}_{i=1}\mathrm{h}_{i}\right)+1\right)y^{\hat{1}}_{(n-1)'}\otimes  v_{\lambda}\in K.$$
Since $\left(\sum^{n-1}_{i=1}\mathrm{h}_{i}\right)^{p} v_{\lambda}-
\left(\sum^{n-1}_{i=1}\mathrm{h}_{i}\right)^{[p]} v_{\lambda}=
\chi\left(\sum^{n-1}_{i=1}\mathrm{h}_{i}\right)^{p} v_{\lambda}$
and $\left(\sum^{n-1}_{i=1}\mathrm{h}_{i}\right)^{[p]}=\sum^{n-1}_{i=1}\mathrm{h}_{i},$
we have $\lambda\left(\sum^{n-1}_{i=1}\mathrm{h}_{i}\right)^{p}-\lambda\left(\sum^{n-1}_{i=1}\mathrm{h}_{i}\right)=
\chi\left(\sum^{n-1}_{i=1}\mathrm{h}_{i}\right)^{p}.$
Considering that $\chi$ is regular semisimple,
we have $\chi(\sum^{n-1}_{i=1}\mathrm{h}_{i})\neq 0.$
Consequently, $\lambda(\sum^{n-1}_{i=1}\mathrm{h}_{i})\neq -1,$
so  $y^{\hat{1}}_{(n-1)'}\otimes  v_{\lambda}\in K.$
Applying   $\mathrm{M}^{p-2}_{x^{(\varepsilon_{n-1}+\varepsilon_{n}+\varepsilon_{n'})}-x^{(\varepsilon_{n-1}+\varepsilon_{1}+\varepsilon_{1'})}}$
to $y^{\hat{1}}_{(n-1)'}\otimes  v_{\lambda},$
we get   $y^{\widehat{p-1}}_{(n-1)'}\otimes  v_{\lambda}\in K$ where
 $y^{\widehat{p-1}}_{(n-1)'}:=\mathrm{M}^{p-1}_{x_{n'}}\mathrm{M}_{x_{n}}
\mathrm{M}_{x_{n-1}}\cdots\mathrm{M}_{x_{1}}\mathrm{M}_{1}.$
Applying
$\mathrm{M}^{p-1}_{x^{(\varepsilon_{n}+\varepsilon_{1}+\varepsilon_{1'})}-x^{(\varepsilon_{n}+\varepsilon_{2}+\varepsilon_{2'})}}$
to $y^{\widehat{p-1}}_{(n-1)'}\otimes  v_{\lambda}$, we obtain from  (\ref{hi}) that
$y^{\widehat{p-1}}_{n'}\otimes  v_{\lambda}\in K$
where $y^{\widehat{p-1}}_{n'}:=\mathrm{M}_{x_{n}}
\mathrm{M}_{x_{n-1}}\cdots\mathrm{M}_{x_{1}}\mathrm{M}_{1}.$

Applying
$\mathrm{M}_{x^{(\varepsilon_{i'}+\varepsilon_{1}+\varepsilon_{1'})}-x^{(\varepsilon_{i'}+\varepsilon_{2}+\varepsilon_{2'})}}$
to $y^{\widehat{p-1}}_{n'}\otimes  v_{\lambda}$
in the order of  $i=n, n-1, \ldots, 3 $ and (\ref{hi}),
we get
$\mathrm{M}_{x_{2}}\mathrm{M}_{x_{1}}\mathrm{M}_{1}
\otimes  v_{\lambda}\in K$.
Going on applying
$\mathrm{M}_{x^{(\varepsilon_{j'}+\varepsilon_{3}+\varepsilon_{3'})}-x^{(\varepsilon_{j'}+\varepsilon_{4}+\varepsilon_{4'})}}$
to $\mathrm{M}_{x_{2}}
\mathrm{M}_{x_{1}}\mathrm{M}_{1}
\otimes  v_{\lambda}$
in the order of  $j=2, 1$,
we get
$\mathrm{M}_{1}\otimes  v_{\lambda}\in K$
 by (\ref{hi}).
For $\frak{g}=\frak{m}$,
apply  $\mathrm{M}_{x^{(\varepsilon_{2n+1}+\varepsilon_{1}+\varepsilon_{1'})}-x^{(\varepsilon_{2n+1}+\varepsilon_{2}+\varepsilon_{2'})}}$
to $\mathrm{M}_{1}\otimes  v_{\lambda}.$
For $\frak{g}=\frak{sm}$, apply
$\mathrm{M}_{x^{(\varepsilon_{2n+1}+\varepsilon_{1}+\varepsilon_{1'})}-x^{(\varepsilon_{2n+1}+\varepsilon_{2}+\varepsilon_{2'})}
+(n\kappa-2)x^{(\varepsilon_{1}+\varepsilon_{1'}+\varepsilon_{2}+\varepsilon_{2'})}}$
to $\mathrm{M}_{1}\otimes  v_{\lambda}.$
Then we  get   $1\otimes  v_{\lambda}\in K$
 by (\ref{hi}).
Therefore $1\otimes M$ is contained in $K$, since   $u_{\chi}(\frak{g}_{[0]})$-module $M$ is simple.
Consequently, we get $K=K_{\chi}(M).$
Hence $K_{\chi}(M)$ is simple.

(2) Finally we shall prove that any simple $U_{\chi}(\frak{g})$-module is isomorphic to
 some $\chi$-reduced Kac module $K_{\chi}(M)$ with $M$ some simple $U_{\chi}(\frak{g}_{[0]})$-module.
Let $A$ be any simple  $u_{\chi}(\frak{g})$-module.
Take a simple $u_{\chi}(\frak{g}^{0})$-submodule
of $A,$ denoted by $M.$
Note that $M$ is also a simple $u_{\chi}(\frak{g}_{[0]})$-module.
Hence there exists a canonical surjective morphism
$\varphi: K_{\chi}(M)\longrightarrow A.$
Since both $K_{\chi}(M)$ and $A$ are simple,
$\varphi$ is an isomorphism.
\end{proof}

%\noindent \textbf{Acknowledgements}

%The authors thank the referees  for the careful reading and valuable suggestions.

%%%%%%%%%%%%%%%%%%%%%%%%%%%%%%%%%%%%%%%%%%%%%%%%%%%%%%%%%%%%%%%%%%%%%%%%%%%%%%%%%%%%

\end{document}